\documentclass[11pt,a4paper]{article}

\usepackage[utf8]{inputenc}
\usepackage[T1]{fontenc}
\usepackage{lmodern}
\usepackage[a4paper,margin=1in]{geometry}
\usepackage{microtype}
\usepackage[numbers,sort&compress]{natbib}
\usepackage{amsmath,amssymb,amsfonts,amsthm,amscd}
\usepackage{mathrsfs}
\usepackage{mathtools}
\usepackage{array}
\usepackage{tikz-cd}
\usepackage{hyperref}

\hypersetup{
    colorlinks=true,
    linkcolor=blue,
    citecolor=blue,
    urlcolor=blue
}

\newtheorem{theorem}{Theorem}[section]
\newtheorem{lemma}[theorem]{Lemma}
\newtheorem{proposition}[theorem]{Proposition}
\newtheorem{corollary}[theorem]{Corollary}
\theoremstyle{definition}
\newtheorem{definition}[theorem]{Definition}
\newtheorem{example}[theorem]{Example}
\theoremstyle{remark}
\newtheorem{remark}[theorem]{Remark}

\newcommand{\CP}{\mathbb{CP}}
\newcommand{\so}{\mathfrak{so}}

\newcommand{\fu}{\mathfrak{u}}

\newcommand{\ft}{\mathfrak t}
\newcommand{\fh}{\mathfrak h}
\newcommand{\fm}{\mathfrak m}
\newcommand{\vol}{\operatorname{vol}}
\newcommand{\Span}{\operatorname{Span}}
\newcommand{\Ad}{\operatorname{Ad}}
\newcommand{\SO}{\operatorname{SO}}

\title{Contact Monopoles in Dimension Five:\\
Rigidity and Homogeneous Classification}

\author{Luis Portilla\\
Department of Mathematics, University of Cauca\\
Popay\'an 190002, Colombia\\
\texttt{luisportilla@unicauca.edu.co}\\
\href{https://orcid.org/0000-0001-7511-2910}{ORCID: 0000-0001-7511-2910}}

\date{}

\begin{document}

\maketitle

\begin{abstract}
We introduce contact monopoles on contact metric $5$-manifolds and study their basic geometric properties. We construct natural examples from Boothby--Wang fibrations over Kähler surfaces and relate the equation to contact instantons on Sasakian $5$-manifolds. In particular, we show that anti-self-dual contact instantons with a parallel Higgs field provide contact monopoles. We also prove a compact rigidity result: on compact contact metric $5$-manifolds, every smooth contact monopole has parallel Higgs field and satisfies the reduced curvature equation $F_A\wedge\omega=0$. In the Sasakian case, this relates compact contact monopoles to anti-self-dual contact instantons under the usual transverse type condition. We also explain why the associated deformation complex is not elliptic in the usual sense.

We then focus on unit tangent sphere bundles of oriented Riemannian $3$-manifolds, endowed with their canonical contact metric structures. In this setting, we derive a local form of the contact monopole equation with respect to the canonical adapted frame and prove that pure pullback configurations from the base are rigid. Finally, for the homogeneous Sasakian manifold
$T_1S^3\cong SO(4)/SO(2)$, we use Wang's theorem to reduce the invariant contact monopole equation to finite-dimensional algebraic systems and classify the invariant solutions for homogeneous $U(1)$- and $SO(3)$-bundles. In the abelian case, the classification yields non-flat homogeneous $U(1)$ contact monopoles for nonzero topological charge. In the non-abelian case, all weights except $m=1$ reduce to the maximal torus, while the weight $m=1$ gives a genuinely non-abelian branch of homogeneous $SO(3)$ contact monopoles.
\end{abstract}

\noindent\textbf{Keywords:} Contact monopoles; contact metric manifolds; Sasakian manifolds; unit tangent sphere bundles; invariant connections; homogeneous principal bundles.

\medskip

%---------------------------------------
\section{Introduction}\label{sec:1}
%--------------------------------------
Monopole equations arise naturally in gauge theory as first-order equations
relating the curvature of a connection to the covariant derivative of a Higgs
field. A common way to produce such equations in different dimensions is to use
a distinguished differential form on the underlying manifold. More precisely,
if $M^n$ carries a distinguished form $\Theta$ of degree $n-3$, one may consider
equations of the form
$$
\ast(F_A\wedge \Theta)=\nabla_A\phi.
$$
In dimension $3$, taking $\Theta=1$ gives the classical Bogomolny equation
\cite{bogomolny1976}. In dimension $7$, if $(M^7,\varphi)$ is endowed with a
$G_2$-structure and $\psi=\ast\varphi$ is the coassociative $4$-form, taking
$\Theta=\psi$ gives the $G_2$-monopole equation studied, for instance, in
\cite{oliveira2014monopoles}. The aim of this paper is to study the analogous
construction in dimension $5$ in the setting of contact metric geometry.

Let $(M^5,\eta,\xi,\Phi,g)$ be a contact metric $5$-manifold. The contact
structure provides a natural distinguished transverse $2$-form, namely the
fundamental form $\omega=d\eta$. Thus, for a connection $A$ on a principal
$G$-bundle $P\to M$, where $G$ is a compact Lie group, and for an adjoint-valued
Higgs field $\phi\in\Omega^0(M,\mathfrak g_P)$, we consider the equation
\begin{equation}\label{eq:intro-contact-monopole}
\ast(F_A\wedge\omega)=\nabla_A\phi.
\end{equation}
We call solutions of \eqref{eq:intro-contact-monopole} \emph{contact
monopoles}.  The contact setting has features which distinguish it from the classical Bogomolny and $G_2$ settings. In particular, the map
$$
\alpha\mapsto \ast(\alpha\wedge\omega)
$$
does not detect all components of a $2$-form. This degeneracy affects the
linearized theory and prevents the associated deformation complex from being
elliptic in the usual sense. Moreover, on compact contact metric
$5$-manifolds, the equation exhibits a strong rigidity phenomenon: every smooth
contact monopole has parallel Higgs field, and the equation reduces to the
system
$$
F_A\wedge\omega=0,
\qquad
\nabla_A\phi=0.
$$
Thus the compact smooth theory does not directly lead to a Fredholm moduli
problem analogous to those arising in the classical Bogomolny or
$G_2$-monopole settings. For this reason, in the present paper we focus on the
geometric meaning of the reduced curvature equation, its relation with contact
instantons, and explicit classifications in natural contact metric examples.

Our first class of examples comes from Boothby--Wang fibrations
\cite{BoothbyWang1958,boyer2007sasakian}. If
$S^1\to M\to X$ is a Boothby--Wang fibration over a compact Kähler surface
$(X,\omega_X)$, then the total space carries a natural regular Sasakian
structure with $d\eta$ equal to the pullback of $\omega_X$. In this setting,
we show that pullback connections with parallel Higgs field solve the contact
monopole equation precisely when the curvature on the Kähler surface is
primitive. Thus, the contact monopole equation is naturally related to the
Hermitian Yang--Mills condition on Kähler surfaces. We also provide explicit
examples coming from line bundles over $\mathbb CP^1\times\mathbb CP^1$, and
we discuss a non-example on the Hopf fibration $S^5\to\mathbb CP^2$, showing
that the primitive condition is essential.

We also relate contact monopoles to contact instantons on Sasakian
$5$-manifolds. Contact instantons, in the sense of
\cite{baraglia2016moduli}, are defined by a self-duality condition involving
the contact form. In dimension $5$, anti-self-dual contact instantons have
transverse primitive curvature. Therefore, every anti-self-dual contact
instanton with parallel Higgs field gives a contact monopole. On the other
hand, the compact rigidity result mentioned above implies that every smooth
contact monopole on a compact contact metric $5$-manifold has parallel Higgs
field and satisfies $F_A\wedge\omega=0$. In the Sasakian case, if the curvature
has transverse type $(1,1)$, this reduced curvature equation is precisely the
primitive condition which characterizes anti-self-dual contact instantons
\cite{baraglia2016moduli}. Thus, compact smooth contact monopoles on Sasakian
$5$-manifolds are closely related to contact instantons. Moreover, if the
connection is irreducible, then the Higgs field vanishes.

This rigidity motivates the study of explicit geometric reductions. In the
second part of the paper, we focus on unit tangent sphere bundles $T_1N$ of
oriented Riemannian $3$-manifolds. Such manifolds carry a canonical contact
metric structure \cite{Tashiro1969,Blair2002,CalvarusoUnitTangent}. Using the
horizontal and tangential vertical decomposition determined by the
Levi--Civita connection of the base, we derive a local component form of
\eqref{eq:intro-contact-monopole} with respect to a canonical adapted frame.
This local system allows us to prove a rigidity result for pure pullback
configurations: if both the connection and the Higgs field are obtained by
pulling back data from the base $N$, then the contact monopole equation holds
if and only if the base connection is flat and the Higgs field is parallel.
Hence the contact monopole equation on $T_1N$ is not simply a lift of the
classical Bogomolny equation on $N$.

Finally, we specialize to the round sphere $N=S^3$. In this case,
$$
T_1S^3\cong V_2(\mathbb R^4)\cong SO(4)/SO(2).
$$
The canonical contact metric structure is homogeneous and Sasakian
\cite{CalvarusoPerrone2010}. This allows us to study invariant contact
monopoles using homogeneous bundle theory. We recall the classification of
homogeneous principal bundles over $SO(4)/SO(2)$ in terms of homomorphisms
from the isotropy group into the structure group, and we use Wang's theorem
\cite{wang1958invariant} to describe invariant connections. The contact
monopole equation then reduces to finite-dimensional algebraic systems.

For homogeneous $U(1)$-bundles, we classify invariant contact monopoles and
obtain non-flat solutions for $m\neq0$. In view of the rigidity theorem for
pure pullbacks, these solutions do not arise from pulling back monopole data
from the base $S^3$. We then consider the first non-abelian case, namely
homogeneous $SO(3)$-bundles. These bundles are determined, up to conjugation,
by weights $m\in\mathbb Z_{\geq0}$. We show that the cases $m\neq1$ reduce to
the maximal torus and do not produce genuinely non-abelian homogeneous contact
monopoles. In contrast, the case $m=1$ admits a genuinely non-abelian branch
of invariant solutions. This provides explicit homogeneous non-abelian contact
monopoles in dimension five.

\subsection*{Description of the main results}

In Section \ref{sec:2}, we introduce the contact monopole equation
\eqref{eq:intro-contact-monopole}. We construct examples using
Boothby--Wang fibrations and prove that, in this setting, the equation reduces
to the primitivity condition on the curvature over the Kähler base. We also
relate contact monopoles to anti-self-dual contact instantons. We then prove a
compact rigidity result for contact metric $5$-manifolds: every smooth contact
monopole has parallel Higgs field and satisfies the reduced curvature equation
$F_A\wedge\omega=0$. In the Sasakian case, this connects compact contact
monopoles with anti-self-dual contact instantons under the usual transverse
type condition. Finally, we describe the deformation complex associated with
the equation and show that it is not elliptic in the usual sense.

In Section \ref{sec:6}, we study the contact monopole equation on unit tangent
sphere bundles. After recalling the canonical contact metric structure on
$T_1N$, we derive the local form of the equation in an adapted frame. As an
application, we prove that pure pullback configurations from the base are
rigid: they solve the contact monopole equation only in the flat and parallel
case.

In Section \ref{sec:7}, we study the homogeneous Sasakian manifold
$T_1S^3\cong SO(4)/SO(2)$. We first describe a reductive model compatible with
the canonical contact metric structure. Then we review homogeneous principal
bundles and invariant connections via Wang's theorem.

For homogeneous $U(1)$-bundles, we classify invariant contact monopoles and
obtain non-flat solutions for nonzero topological charge. We then consider the
first non-abelian case, namely homogeneous $SO(3)$-bundles. In this case, we
classify the invariant contact monopoles in terms of the weight $m$ of the
isotropy homomorphism: the cases $m\neq1$ reduce to the maximal torus, while
the case $m=1$ produces a genuinely non-abelian branch.

%===================================================================== 
\section{Contact monopoles: Basic examples and rigidity}\label{sec:2} 
%===================================================================== 
Let $(M,\eta,\xi,g,\Phi)$ be a contact metric $5$-manifold, let $P\to M$ be a principal $G$-bundle, where $G$ is a compact Lie group, and let $\mathfrak g_P$ be the adjoint bundle. In the contact setting, the natural distinguished $2$-form is the transverse fundamental form $\omega:=d\eta.$ 
\begin{definition} 
Let $A\in \mathcal A(P)$ be a connection and let $\phi\in \Omega^0(M,\mathfrak g_P)$ be a section. A pair $(A,\phi)$ satisfying \begin{equation}\label{eq:monopole 5} 
\ast(F_A\wedge \omega)=\nabla_A\phi 
\end{equation} 
is called a \textit{contact monopole}. The section $\phi$ is called a \textit{Higgs field}. 
\end{definition} 
We use the notations $d_A$ and $\nabla_A$ interchangeably when acting on sections of the adjoint bundle. 
%--------------------------------------------------------------- 
\subsection{Boothby--Wang lifts and first example}\label{sec:21} 
%--------------------------------------------------------------- 
Let $(X,\omega_X)$ be a compact Hodge surface, i.e., a compact Kähler surface with integral Kähler class $[\omega_X]\in H^2(X,\mathbb Z)$. By the Boothby--Wang construction \cite{boyer2007sasakian}, there exists a principal $S^1$-bundle $ \pi\colon M \to X$ with connection $1$-form $\eta$ satisfying $d\eta=\pi^\ast \omega_X.$ The total space $M$ carries a natural regular Sasakian structure. 
\begin{example}\label{ex:CP1CP1} 
Let $M\xrightarrow{\pi}  \CP^1\times\CP^1$ be the Boothby--Wang fibration associated with the integral Kähler form $\omega=\omega_1+\omega_2,$ where $\omega_i$ denotes the Fubini--Study form on the $i$-th factor. 
Consider the holomorphic line bundle $L=\mathcal O(k,-k)\longrightarrow \CP^1\times\CP^1,$ equipped with its natural Hermitian connection $A_L$ \cite{GriffithsHarris1978,Hartshorne1977}. Its curvature is given by 
$$ 
F_{A_L}=ik(\omega_1-\omega_2). 
$$ 
Let $A:=\pi^\ast A_L$ be the pullback connection on $\pi^\ast L\to M$. Since $F_A=\pi^\ast F_{A_L}$ and, by the Boothby--Wang construction, $d\eta=\pi^\ast\omega,$ we obtain $$ F_A\wedge d\eta=\pi^\ast(F_{A_L}\wedge\omega). $$ Now, 
$$ 
F_{A_L}\wedge\omega = ik(\omega_1-\omega_2)\wedge(\omega_1+\omega_2)=0, 
$$ 
and consequently $F_A\wedge d\eta=0.$  Since the structure group is abelian, the adjoint representation is trivial. Hence the adjoint bundle is canonically trivial, and any constant section $\phi\in\Omega^0(M,i\mathbb R)$ is $A$-parallel. Therefore 
$$ 
\ast(F_A\wedge d\eta)=0=\nabla_A\phi, 
$$ 
and the pair $(A,\phi)$ defines a contact monopole \eqref{eq:monopole 5} on the Sasakian manifold $M^5$. 
\end{example} 

 This  motivates the following characterization result.
\begin{proposition}\label{prop:BWlift} 
Let $S^1\to M\to X^4$ be a Boothby--Wang fibration over a compact Kähler surface $(X,\omega_X)$  \cite{BoothbyWang1958}. Let $P_X\to X$ be a principal $G$-bundle endowed with a connection $A_X$, and let $P\to M$ be the pullback bundle  $P:=\pi^\ast P_X$ 
$$
\begin{CD}  P @>{ }>> P_X\\  @V{ }VV @VV{ }V\\  M @>{\pi}>> X. 
\end{CD}
$$ 
Assume that $\phi\in\Omega^0(M,\mathfrak g_P)$ is $A$-parallel, where   $A=\pi^\ast A_X$. Then the pair $(A,\phi)$ satisfies the contact monopole equation \eqref{eq:monopole 5} if and only if $F_{A_X}\wedge\omega_X=0.$ 
In particular, if $A_X$ is ASD, then every $A$-parallel Higgs field defines a contact monopole on $M$. 
\end{proposition}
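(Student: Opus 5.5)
Since $\phi$ is $A$-parallel, the right-hand side of \eqref{eq:monopole 5} vanishes. The equation therefore reduces to $\ast(F_A\wedge\omega)=0$. Because the Hodge star is an isomorphism $\Omega^5(M,\mathfrak g_P)\to\Omega^0(M,\mathfrak g_P)$, this is equivalent to $F_A\wedge d\eta=0$ on $M$. The plan is to express this $5$-form condition in terms of data on $X$ and then use injectivity of pullback.

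First, naturality of curvature under pullback gives $F_A=\pi^\ast F_{A_X}$, as a $\pi^\ast\mathfrak g_{P_X}=\mathfrak g_P$-valued $2$-form. The Boothby--Wang construction gives $d\eta=\pi^\ast\omega_X$. Hence
$$F_A\wedge d\eta=\pi^\ast(F_{A_X}\wedge\omega_X),$$
exactly as in Example~\ref{ex:CP1CP1}. This immediately gives the implication ``$\Leftarrow$''.

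For ``$\Rightarrow$'', I use that $\pi\colon M\to X$ is a surjective submersion. At each point $p\in M$ the differential $d\pi_p$ is onto, so $\pi^\ast\colon\Lambda^4T^\ast_{\pi(p)}X\to\Lambda^4T^\ast_pM$ is injective. This holds for bundle-valued forms too, by working in a local trivialization of $\mathfrak g_{P_X}$ and its pullback. Thus $\pi^\ast(F_{A_X}\wedge\omega_X)=0$ forces $F_{A_X}\wedge\omega_X=0$ at every point of $X$, using surjectivity of $\pi$. There is a bookkeeping subtlety: the degree-$4$ form on $X$ pulls back to a $4$-form on $M$, whereas $F_A\wedge d\eta$ as written is a $4$-form, not a $5$-form. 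So the quantity being starred is $\ast_M$ of a $4$-form, giving a $1$-form, consistent with $\nabla_A\phi$. Either way, $\ast_M$ is an isomorphism and the argument is unchanged.

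The ``in particular'' clause uses the standard fact that on a Kähler surface $\Lambda^+=\mathbb R\,\omega_X\oplus\Lambda^{2,0\oplus0,2}$. If $F_{A_X}$ is anti-self-dual, then $F_{A_X}\wedge\omega_X=\langle F_{A_X},\omega_X\rangle\,\mathrm{vol}_X=0$, since $\omega_X$ is self-dual and $\Lambda^-\perp\Lambda^+$. Explicitly, $\alpha\wedge\omega_X=\langle\alpha,\ast\omega_X\rangle\mathrm{vol}=\langle\alpha,\omega_X\rangle\mathrm{vol}$. I expect no real obstacle here. The only point needing care is the pointwise injectivity of pullback on adjoint-valued forms together with the identification $\pi^\ast\mathfrak g_{P_X}\cong\mathfrak g_P$, both of which are routine.
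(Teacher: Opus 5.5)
Your proposal is correct and follows the paper's proof essentially step for step. You use parallelism of $\phi$ and invertibility of $\ast$ to reduce the equation to $F_A\wedge d\eta=0$, rewrite this as $\pi^\ast(F_{A_X}\wedge\omega_X)=0$, and invoke injectivity of $\pi^\ast$ for the surjective submersion $\pi$. For the ASD clause you use orthogonality to the self-dual form $\omega_X$, as the paper does. Your opening reference to $\Omega^5\to\Omega^0$ is a slip that you already caught: $F_A\wedge d\eta$ is a $4$-form, and $\ast$ maps it isomorphically onto $\Omega^1(M,\mathfrak g_P)$.
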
 
\begin{proof} 
Since $A=\pi^\ast A_X$, its curvature is $F_A=\pi^\ast F_{A_X}.$
Moreover, by the Boothby--Wang construction, $d\eta=\pi^\ast\omega_X.$ Therefore
$$ 
F_A\wedge d\eta = \pi^\ast F_{A_X}\wedge \pi^\ast\omega_X = \pi^\ast(F_{A_X}\wedge\omega_X). 
$$ 
Since $\phi$ is $A$-parallel, we have $\nabla_A\phi=0.$ Thus, for the present pair $(A,\phi)$, the contact monopole equation \eqref{eq:monopole 5} reduces to 
$$
F_A\wedge d\eta=0, 
$$
since the Hodge star is an isomorphism. Using the identity above, we obtain
$F_A\wedge d\eta=0$ if and only if $\pi^\ast(F_{A_X}\wedge\omega_X)=0.$ Since $\pi:M\to X$ is a surjective submersion, the pullback map $\pi^\ast$ is injective on differential forms. Hence
$\pi^\ast(F_{A_X}\wedge\omega_X)=0$ if and only if $F_{A_X}\wedge\omega_X=0.$ Therefore $(A,\phi)$ satisfies the contact monopole equation \eqref{eq:monopole 5} if and only if
$$
F_{A_X}\wedge\omega_X=0.
$$
Finally, assume that $A_X$ is anti-self-dual on the Kähler surface $X$. With
respect to the Kähler orientation, the Kähler form $\omega_X$ is self-dual.
Hence an anti-self-dual $2$-form is pointwise orthogonal to $\omega_X$, and so
$$
F_{A_X}\wedge\omega_X=0.
$$
Thus every $A$-parallel Higgs field defines a contact monopole on $M$.
\end{proof} 
The Boothby--Wang construction shows that contact monopoles are naturally related to primitive curvature on Kähler surfaces. We now turn to the intrinsic contact-geometric interpretation of this condition in terms of contact instantons. 
%-------------------------------------------------------------------------- 
\subsection{Contact instantons and compact monopoles}\label{sec:3} 
%--------------------------------------------------------------------------- 
We use the following definition from \cite{baraglia2016moduli}.
\begin{definition} 
Let $(M^5,\eta,\xi,\Phi,g)$ be a Sasakian manifold. A connection $A$ is called a \textbf{self-dual contact instanton} (SDCI) for $\lambda=1$ and an \textbf{anti-self-dual contact instanton} (ASDCI) for $\lambda=-1$ if 
\begin{equation}\label{eq:SDCI} 
\ast(F_A\wedge\eta)=\lambda F_A. 
\end{equation} 
\end{definition} 
The space of $k$-forms on $M$ decomposes as 
\begin{equation}\label{eq:kformsdecomposition}
\Omega^k(M)=\Omega_H^k(M)\oplus \eta\wedge\Omega_H^{k-1}(M), 
\end{equation} 
where $\Omega_H^k(M)$ denotes horizontal $k$--forms with respect to the Reeb foliation, i.e.,   $\iota_\xi\alpha=0$ \cite{boyer2007sasakian}. The horizontal $2$-forms further decompose into self-dual and anti-self-dual parts \cite{baraglia2016moduli}: 
$$ 
\Omega_H^2(M)=\Omega_H^{2,+}(M)\oplus\Omega_H^{2,-}(M), 
$$ 
where $\alpha\in\Omega_H^{2,\pm}$ if 
$$ 
\ast(\alpha\wedge\eta)=\pm \alpha. 
$$ 
This decomposition is compatible with the notion of contact instantons in \eqref{eq:SDCI}. The horizontal distribution carries a transverse Kähler structure. Hence the complexified bundle of transverse $2$-forms decomposes as 
$$ 
\Omega^2(M,\mathfrak g_P)= \Omega_H^{2,0}(M,\mathfrak g_P) \oplus \Omega_H^{0,2}(M,\mathfrak g_P) \oplus \Omega_{H,0}^{1,1}(M,\mathfrak g_P) \oplus \langle\omega\rangle\otimes\Omega^0(M,\mathfrak g_P). 
$$ 
Here 
$$ 
\Omega_{H,0}^{1,1}(M,\mathfrak g_P) = \{\alpha\in\Omega_H^{1,1}(M,\mathfrak g_P)\mid \Lambda\alpha=0\} 
$$ 
denotes the primitive $(1,1)$-component, i.e., the $(1,1)$-forms orthogonal to $\omega$. Accordingly, the curvature decomposes as 
$$ 
F_A= F_A^{2,0} + F_A^{0,2} + F_{A,0}^{1,1} + f\,\omega + \eta\wedge\beta, $$ 
where $F_{A,0}^{1,1}$ is primitive, $f\in\Omega^0(M,\mathfrak g_P)$, and $\beta\in\Omega_H^1(M,\mathfrak g_P)$. We recall that on a Sasakian $5$-manifold, the curvature of an ASDCI is transverse of type $(1,1)$ and primitive \cite{baraglia2016moduli}.
\begin{lemma}\label{lem:ASDcovariantThenMonopole} 
Let $(M,\eta,\xi,\Phi,g)$ be a Sasakian $5$-manifold, and let
$(A,\phi)$ be a smooth configuration. If $A$ is an ASDCI and $\phi$ is
$A$-parallel, then $(A,\phi)$ is a solution of the contact monopole equation.
\end{lemma}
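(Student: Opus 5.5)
Since $\phi$ is $A$-parallel, the right-hand side of \eqref{eq:monopole 5} vanishes. The Hodge star is an isomorphism, so the contact monopole equation for $(A,\phi)$ reduces to the single curvature condition
$$
F_A\wedge\omega=0,\qquad \omega=d\eta .
$$
The plan is therefore to show that the curvature of an ASDCI wedges to zero against $\omega$. We use the decomposition of $F_A$ recalled above,
$$
F_A=F_A^{2,0}+F_A^{0,2}+F_{A,0}^{1,1}+f\,\omega+\eta\wedge\beta .
$$

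First we remove the vertical component. Applying \eqref{eq:SDCI} with $\lambda=-1$ gives $F_A=-\ast(F_A\wedge\eta)$. The form $F_A\wedge\eta$ only sees the horizontal part of $F_A$. Since $\ast$ maps $\eta\wedge\Omega^2_H$ into horizontal $2$-forms, $F_A$ is horizontal, and hence $\beta=0$. Alternatively, we can invoke directly the fact quoted from \cite{baraglia2016moduli} that ASDCI curvature is transverse of type $(1,1)$ and primitive. This gives $F_A^{2,0}=F_A^{0,2}=0$ and $f=0$, so $F_A=F_{A,0}^{1,1}$.

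Second, we use the transverse linear algebra on the rank-$4$ Hermitian horizontal distribution. Fix a point and an adapted orthonormal coframe $e^1,\dots,e^4,\eta$ with $\omega$ proportional to $e^{12}+e^{34}$; the constant factor is irrelevant for the vanishing. There the horizontal $\ast_H$-self-dual forms are spanned by $\omega$ and the real parts of $\Lambda^{2,0}\oplus\Lambda^{0,2}$, while the anti-self-dual forms are exactly the primitive $(1,1)$-forms. For horizontal $\alpha,\gamma$ we have $\alpha\wedge\gamma=\langle\alpha,\ast_H\gamma\rangle\,\mathrm{vol}_H$. Taking $\gamma=\omega$ with $\ast_H\omega=\omega$ gives $\alpha\wedge\omega=\langle\alpha,\omega\rangle\,\mathrm{vol}_H=(\Lambda\alpha)\,\mathrm{vol}_H$ up to a constant. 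Applying this componentwise in $\mathfrak g_P$ to $\alpha=F_{A,0}^{1,1}$, primitivity gives $F_A\wedge\omega=0$. Hence $\ast(F_A\wedge\omega)=0=\nabla_A\phi$.

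The main obstacle is the sign and orientation bookkeeping: checking that the convention $\ast(\alpha\wedge\eta)=-\alpha$ on horizontal forms matches transverse anti-self-duality with respect to the orientation for which $\omega$ is self-dual. We verify this directly in the adapted coframe by computing $\ast(e^{12}\wedge\eta)$ with $\mathrm{vol}=\tfrac12\,\eta\wedge\omega^2$ up to normalization. If the orientation conventions turn out reversed, $\omega$ itself would lie in the other eigenspace. In that case the cited primitivity statement of \cite{baraglia2016moduli} still suffices, and it is the fact we will ultimately rely on.
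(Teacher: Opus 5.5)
Your proposal is correct and follows the same route as the paper: the parallel Higgs field kills the right-hand side, and the curvature of an ASDCI is horizontal and primitive by \cite{baraglia2016moduli}, so $F_A\wedge\omega=0$ and hence $\ast(F_A\wedge\omega)=0=\nabla_A\phi$. Your coframe computation just unpacks the paper's ``i.e.''. Note that for horizontal $\alpha$ one has $\alpha\wedge\omega=\langle\alpha,\ast_H\omega\rangle\,\mathrm{vol}_H$ with $\ast_H\omega=\pm\omega$, so primitivity gives $\alpha\wedge\omega=0$ in either orientation; the orientation bookkeeping only matters for identifying the $-1$ eigenspace of $\alpha\mapsto\ast(\alpha\wedge\eta)$ with the primitive $(1,1)$-forms, which holds when $\eta\wedge\omega^2>0$ and is exactly what the citation supplies.
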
 
\begin{proof}
If $A$ is an ASDCI, then its curvature is horizontal and primitive, i.e.,  $F_A\wedge \omega=0,$ where $\omega=d\eta$ is the fundamental $2$-form \cite{baraglia2016moduli}. Since $\phi$ is $A$-parallel, it follows that
$$
\ast(F_A\wedge\omega)=0=\nabla_A\phi,
$$
and hence $(A,\phi)$ satisfies the contact monopole equation
\eqref{eq:monopole 5}.
\end{proof}
We now record a general compactness phenomenon for the contact monopole equation. On compact contact metric $5$-manifolds, the equation forces the Higgs field to be parallel. Consequently, the contact monopole equation decouples into the reduced curvature condition $F_A\wedge\omega=0$ and the parallel Higgs condition $\nabla_A\phi=0$.
\begin{lemma}\label{lem:compact-contact-monopoles}
Let $(M,\eta,\xi,\Phi,g)$ be a compact contact metric $5$-manifold, and let
$(A,\phi)$ be a smooth contact monopole. Then $\phi$ is $A$-parallel. In
particular, $F_A\wedge\omega=0.$ Moreover, if $A$ is irreducible, then $\phi=0$.
\end{lemma}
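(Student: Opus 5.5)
The plan is the standard integration-by-parts argument used for Bogomolny and $G_2$-monopoles. I will compute $\|\nabla_A\phi\|_{L^2}^2$ by substituting the equation and show that it is a boundary-free exact term.

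Using \eqref{eq:monopole 5} and the $\operatorname{Ad}$-invariant inner product on $\mathfrak g_P$, we have pointwise
$$
|\nabla_A\phi|^2\,\vol=\langle \nabla_A\phi\wedge\ast\nabla_A\phi\rangle=\langle d_A\phi\wedge \ast\ast(F_A\wedge\omega)\rangle .
$$
In dimension $5$ we have $\ast\ast=1$ on $4$-forms, so this equals $\langle d_A\phi\wedge F_A\wedge\omega\rangle$ (up to reordering signs for even-degree forms).

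Next I observe that
$$
d\langle \phi, F_A\wedge\omega\rangle=\langle d_A\phi\wedge F_A\wedge\omega\rangle+\langle\phi, d_AF_A\wedge\omega\rangle+\langle\phi,F_A\wedge d\omega\rangle .
$$
The second term vanishes by the Bianchi identity $d_AF_A=0$. The third vanishes because $\omega=d\eta$ is exact, so $d\omega=0$. This is the key point: we need $\omega$ closed, which holds for every contact metric structure. Coclosedness of $\omega$ and any Sasakian condition are not needed.

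Hence $|\nabla_A\phi|^2\vol$ is exact. Integrating over the compact manifold $M$ without boundary and applying Stokes gives $\|\nabla_A\phi\|_{L^2}^2=0$, so $\nabla_A\phi=0$. The equation then reads $\ast(F_A\wedge\omega)=0$, and since $\ast$ is an isomorphism, $F_A\wedge\omega=0$.

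For the last claim, a nonzero $A$-parallel section of $\mathfrak g_P$ generates a nontrivial infinitesimal stabilizer of $A$, so $A$ is reducible. Equivalently, the holonomy centralizes $\phi(x)\neq0$, so holonomy reduces to the centralizer. Irreducibility therefore forces $\phi=0$, taking "irreducible" to mean that the connection has no nonzero parallel sections of $\mathfrak g_P$, i.e.\ the stabilizer is the centre and the centre is discrete. With a non-discrete centre one would only get $\phi$ central, so I will state the convention explicitly.

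The only delicate points are bookkeeping ones: checking signs in the graded Leibniz rule and $\ast\ast$, and noting that the sign ambiguity is harmless because we conclude an $L^2$ norm equals $\pm$ the integral of an exact form, which is zero in either case. I do not expect a genuine obstacle. The main thing to verify carefully is that closedness of $\omega$, rather than any metric compatibility, is all that is used.
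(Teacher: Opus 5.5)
Your argument is correct and is essentially the paper's proof. The paper applies $d_A$ to $F_A\wedge\omega=\ast d_A\phi$ to obtain $d_A^\ast d_A\phi=0$ and then pairs with $\phi$; this is the same integration by parts that you carry out directly through $|\nabla_A\phi|^2\vol=d\langle\phi,F_A\wedge\omega\rangle$, and it uses the same two inputs, the Bianchi identity and $d\omega=0$. Your stated convention for irreducibility (no nonzero $A$-parallel sections of $\mathfrak g_P$) is exactly the one the paper uses implicitly in its last line.
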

\begin{proof}
Since $(A,\phi)$ satisfies the contact monopole equation, we have
\[
F_A\wedge\omega=\ast d_A\phi.
\]
Applying $d_A$ and using the Bianchi identity $d_AF_A=0$ together with
$d\omega=0$, we obtain
\[
0=d_A(F_A\wedge\omega)=d_A(\ast d_A\phi).
\]
Thus $d_A^\ast d_A\phi=0.$ Taking the $L^2$ inner product with $\phi$ gives
\[
0=\langle d_A^\ast d_A\phi,\phi\rangle_{L^2} = \|d_A\phi\|^2_{L^2}.
\]
Hence $d_A\phi=0$. Substituting this into the contact monopole equation gives
$F_A\wedge\omega=0.$ Finally, if $A$ is irreducible, every $A$-parallel adjoint-valued section  vanishes. Therefore $\phi=0$.
\end{proof}
\begin{corollary}\label{cor:compact-sasakian-monopoles-ASDCI}
Let $(M,\eta,\xi,\Phi,g)$ be a compact Sasakian $5$-manifold, and let
$(A,\phi)$ be a smooth contact monopole. Then $\phi$ is $A$-parallel and
$F_A\wedge\omega=0.$ If, in addition, $F_A^{2,0}=F_A^{0,2}=0$, then $A$ is an ASDCI.  
\end{corollary}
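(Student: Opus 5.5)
The first assertion is a direct specialization of Lemma~\ref{lem:compact-contact-monopoles}. A compact Sasakian $5$-manifold is in particular a compact contact metric $5$-manifold, so for a smooth contact monopole $(A,\phi)$ we immediately get $\nabla_A\phi=0$ and $F_A\wedge\omega=0$. The remaining work is to show that $F_A\wedge\omega=0$, together with the vanishing of $F_A^{2,0}$ and $F_A^{0,2}$, forces $F_A$ to be horizontal and primitive of type $(1,1)$. From there the ASDCI condition \eqref{eq:SDCI} with $\lambda=-1$ will follow.

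The plan is to use the decomposition \eqref{eq:kformsdecomposition} together with the curvature decomposition recorded above,
$$
F_A= F_A^{2,0} + F_A^{0,2} + F_{A,0}^{1,1} + f\,\omega + \eta\wedge\beta .
$$
Since $\omega=d\eta$ is horizontal, wedging with $\omega$ gives
$$
F_A\wedge\omega=\bigl(F_A^{2,0}+F_A^{0,2}+F_{A,0}^{1,1}+f\,\omega\bigr)\wedge\omega+\eta\wedge(\beta\wedge\omega).
$$
The first bracket is a horizontal $4$-form and the second term lies in $\eta\wedge\Omega^3_H$. These two summands of $\Omega^4$ are complementary, so each must vanish separately.

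For the $\eta$-part, the transverse structure has real rank $4$, so the Lefschetz map $L\colon\Omega^1_H\to\Omega^3_H$, $\beta\mapsto\beta\wedge\omega$, is a pointwise isomorphism. Hence $\beta\wedge\omega=0$ gives $\beta=0$, and $F_A$ is horizontal.

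For the horizontal part, I will use transverse type and primitivity. The terms $F_A^{2,0}\wedge\omega$ and $F_A^{0,2}\wedge\omega$ would have types $(3,1)$ and $(1,3)$, which vanish in complex transverse dimension $2$; in any case these terms are zero by hypothesis. The term $F_{A,0}^{1,1}\wedge\omega$ vanishes because the component is primitive, i.e.\ $\Lambda F^{1,1}_{A,0}=0$. What remains is $f\,\omega\wedge\omega=2f\,\vol_H$, where $\vol_H$ is the transverse volume form. This forces $f=0$. Therefore $F_A=F_{A,0}^{1,1}$ is horizontal, of type $(1,1)$, and primitive.

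The final step is to check that a horizontal primitive $(1,1)$-form $\alpha$ satisfies $\ast(\alpha\wedge\eta)=-\alpha$, i.e.\ that $\Omega^{1,1}_{H,0}=\Omega^{2,-}_H$. I would verify this pointwise in an adapted orthonormal frame $e_1,\Phi e_1,e_2,\Phi e_2,\xi$. In that frame $\omega$ is proportional to $e^{12}+e^{34}$, and the primitive $(1,1)$-forms are spanned by $e^{12}-e^{34}$, $e^{13}+e^{24}$ and $e^{14}-e^{23}$, which are exactly the transverse anti-self-dual forms for the transverse orientation in which $\omega$ is self-dual. This is the characterization recalled from \cite{baraglia2016moduli}. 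I expect the main obstacle to be this sign and orientation bookkeeping: the convention $\ast(\alpha\wedge\eta)=\pm\alpha$ has to match the orientation $\eta\wedge\omega^2>0$ so that $\omega\in\Omega^{2,+}_H$. Once that is fixed, $F_A\in\Omega^{2,-}_H(M,\mathfrak g_P)$, so $A$ is an ASDCI.
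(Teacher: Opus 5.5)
Your proof is correct. The first half is exactly the paper's: you specialize Lemma~\ref{lem:compact-contact-monopoles} to the Sasakian case.

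The second half takes a more self-contained route. The paper simply asserts that $F_A^{2,0}=F_A^{0,2}=0$ together with $F_A\wedge\omega=0$ makes $F_A$ transverse $(1,1)$ and primitive. It then invokes the characterization of ASDCIs from \cite{baraglia2016moduli} as a black box. In doing so it leaves implicit why the component $\eta\wedge\beta$ vanishes, and that is needed for $F_A$ to be horizontal.

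You instead split $F_A\wedge\omega$ into its $\Omega^4_H$ and $\eta\wedge\Omega^3_H$ parts and use the Lefschetz isomorphism $\Omega^1_H\to\Omega^3_H$ to get $\beta=0$. You read off $f=0$ from $f\,\omega\wedge\omega$. Finally you verify $\Omega^{1,1}_{H,0}=\Omega^{2,-}_H$ pointwise in a $\Phi$-adapted frame, and that frame computation is right.

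The paper's route is shorter but relies on the cited conventions. Yours shows precisely that $F_A\wedge\omega=0$ amounts to $\beta=0$ and $f=0$. It leaves $F_A^{2,0}$, $F_A^{0,2}$ and $F_{A,0}^{1,1}$ unconstrained, which explains why the extra type hypothesis is needed.

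Your approach also exposes the orientation dependence you flagged. The identification of primitive $(1,1)$-forms with transversally anti-self-dual forms requires the orientation with $\eta\wedge\omega^2>0$, as in \cite{baraglia2016moduli}. The orientation the paper later fixes on $T_1N$, $\vol_g=-\tfrac12\, d\eta\wedge d\eta\wedge\eta$, is the opposite one, and there $\omega$ itself is transversally anti-self-dual. So the corollary should be read with the convention of \cite{baraglia2016moduli}.
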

\begin{proof}
By Lemma \ref{lem:compact-contact-monopoles}, every smooth contact monopole
on a compact contact metric $5$-manifold has $A$-parallel Higgs field and
satisfies $F_A\wedge\omega=0.$ In particular, these conclusions hold on compact Sasakian $5$-manifolds.

Assume now that $F_A^{2,0}=F_A^{0,2}=0$. Then $F_A$ is of transverse
type $(1,1)$  and primitive. By the  characterization of anti-self-dual contact instantons on Sasakian $5$-manifolds  \cite{baraglia2016moduli}, this is equivalent to $A$ being an ASDCI.  
\end{proof} 
\begin{example}[A non-example on the Hopf fibration] Consider the standard Hopf fibration  $S^5 \xrightarrow{\pi} \CP^2,$  where $\CP^2$ is endowed with the Fubini--Study Kähler form $\omega_{FS}$. The standard Sasakian structure on $S^5$ satisfies 
$$ 
d\eta=\pi^\ast \omega_{FS}. 
$$ 
Let $\mathcal O(-1)\to \CP^2$ denote the tautological line bundle equipped with its Chern connection $A$ \cite{GriffithsHarris1978,Hartshorne1977}. Its curvature is given by 
$
F_A=i\,\omega_{FS}.
$ 
In particular, $F_A$ is of type $(1,1)$. However, 
$$ 
F_A\wedge\omega_{FS}= i\,\omega_{FS}^2 \neq 0. 
$$ 
Hence $F_A$ is not primitive. Let $\widetilde A=\pi^\ast A$ be the pullback connection to $S^5$. Then 
$$ 
F_{\widetilde A}\wedge d\eta = \pi^\ast (F_A\wedge\omega_{FS}) = i\,\pi^\ast (\omega_{FS}^2) \neq 0. 
$$ 
Consequently, $\ast(F_{\widetilde A}\wedge d\eta)\neq 0.$ If $(\widetilde A,\phi)$ were a smooth contact monopole on $S^5$, then, since
$S^5$ is compact, Lemma \ref{lem:compact-contact-monopoles}
would imply that $\phi$ is $\widetilde A$-parallel. Hence
$$
\nabla_{\widetilde A}\phi=0,
$$
which contradicts $\ast(F_{\widetilde A}\wedge d\eta)\neq 0.$ Therefore, the fixed pullback connection $\widetilde A$ cannot occur in any  smooth contact monopole pair $(\widetilde A,\phi)$ on $S^5$.
\end{example}
This example shows that being of type $(1,1)$ is not sufficient for a lifted
connection to define a contact monopole. The primitive condition
$F_A\wedge\omega_{FS}=0$ is essential.
%--------------------------------------------------------------- 
\subsection{The deformation complex and its degeneracy}\label{sec:4} 
%--------------------------------------------------------------- 
Let $(A,\phi)$ be a smooth configuration, which need not satisfy the monopole equation \eqref{eq:monopole 5}. Consider the nonlinear map 
$$ 
\mathcal{F}(A,\phi):=\ast(F_A\wedge\omega)-\nabla_A\phi \in \Omega^1(M,\mathfrak g_P). 
$$ 
Then $(A,\phi)$ is a monopole if and only if $\mathcal F(A,\phi)=0$. The linearization of the monopole equation at $(A,\phi)$ is 
$$
\begin{aligned} 
d_2:(\varphi,a)\in \Omega^0(M,\mathfrak g_P)\oplus\Omega^1(M,\mathfrak g_P) &\mapsto \ast (d_A a\wedge\omega)-\bigl(\nabla_A\varphi+[a,\phi]\bigr)\in  \Omega^1(M,\mathfrak g_P)  
\end{aligned} 
$$
Moreover, the infinitesimal action of the gauge group is 
$$
d_1:s\in \Omega^0(M,\mathfrak g_P) \mapsto ([s,\phi],-\nabla_As)\in  \Omega^0(M,\mathfrak g_P)\oplus\Omega^1(M,\mathfrak g_P).
$$
We write $\nabla_A=d_A$ on adjoint-valued functions. These maps fit into the sequence 
\begin{equation}\label{eq:MonopoleComplex} 
0\longrightarrow \Omega^0(M,\mathfrak g_P) \xrightarrow{d_1} \Omega^0(M,\mathfrak g_P)\oplus\Omega^1(M,\mathfrak g_P) \xrightarrow{d_2} \Omega^1(M,\mathfrak g_P) \longrightarrow 0. 
\end{equation} 
\begin{lemma} 
If $(A,\phi)$ is a monopole, then the sequence \eqref{eq:MonopoleComplex} is a complex. 
\end{lemma}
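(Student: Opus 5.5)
The plan is to verify directly that $d_2\circ d_1=0$ whenever $\mathcal F(A,\phi)=0$. Both maps are explicit first-order operators, so I expect no analysis to be needed. The argument is a pointwise algebraic identity that uses the monopole equation exactly once. Fix $s\in\Omega^0(M,\mathfrak g_P)$ and substitute $d_1 s=([s,\phi],-\nabla_A s)$ into $d_2$, with $\varphi=[s,\phi]$ and $a=-d_As$:
$$
d_2(d_1s)=\ast\bigl(d_A(-d_As)\wedge\omega\bigr)-\bigl(\nabla_A[s,\phi]+[-\nabla_As,\phi]\bigr).
$$

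I would treat the two groups of terms separately. For the Higgs-field terms, the Leibniz rule for the bracket gives $\nabla_A[s,\phi]=[\nabla_As,\phi]+[s,\nabla_A\phi]$. The $[\nabla_As,\phi]$ pieces then cancel, leaving $-[s,\nabla_A\phi]$. For the curvature term, I use the standard identity $d_Ad_As=[F_A,s]$ on adjoint-valued $0$-forms. This gives $-\ast([F_A,s]\wedge\omega)$.

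The key step is commuting the bracket with $s$ past the wedge with $\omega$ and past the Hodge star. Since $s$ is a $0$-form, bracketing with $s$ acts fibrewise on the $\mathfrak g_P$ factor only. The operations $\alpha\mapsto\alpha\wedge\omega$ and $\ast$ act only on the form factor, so they commute with this bracket, and
$$
\ast([F_A,s]\wedge\omega)=[\ast(F_A\wedge\omega),s].
$$
Now I invoke the monopole equation \eqref{eq:monopole 5} to replace $\ast(F_A\wedge\omega)$ by $\nabla_A\phi$. This gives $[\nabla_A\phi,s]=-[s,\nabla_A\phi]$, so the curvature term equals $+[s,\nabla_A\phi]$. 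Adding the two groups gives
$$
d_2(d_1s)=[s,\nabla_A\phi]-[s,\nabla_A\phi]=0,
$$
so the sequence \eqref{eq:MonopoleComplex} is a complex.

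The only place requiring care is the sign bookkeeping. This covers the sign in $d_1$ (the $-\nabla_As$ component), the sign convention in $d_Ad_As=[F_A,s]$, and the antisymmetry of the bracket. With the conventions stated in the paper, the terms cancel as above. With the opposite convention for $d_Ad_A$, I would recheck that $d_1$ is the correct infinitesimal gauge action, namely the derivative of $g\cdot(A,\phi)$, so that the cancellation persists. I would also note that no compactness and no Sasakian hypothesis is used, only the equation itself and the fact that $d\omega=0$ is irrelevant here.
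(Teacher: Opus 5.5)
Your proposal is correct and follows essentially the same computation as the paper: substitute $d_1s$ into $d_2$, use the Leibniz rule and $d_Ad_As=[F_A,s]$, move the bracket with $s$ outside $\ast(\cdot\wedge\omega)$, and cancel. The only cosmetic difference is that the paper first collects everything into $[s,\ast(F_A\wedge\omega)-\nabla_A\phi]$ and then invokes the monopole equation, whereas you substitute the equation one step earlier.
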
 
\begin{proof} 
For $s\in\Omega^0(M,\mathfrak g_P)$, we compute 
\begin{align*} 
d_2(d_1s) &= d_2([s,\phi],-\nabla_As)
= \ast\bigl(d_A(-\nabla_As)\wedge\omega\bigr) 
-\bigl(\nabla_A[s,\phi]+[-\nabla_As,\phi]\bigr)\\ 
&= -\ast([F_A,s]\wedge\omega)-[s,\nabla_A\phi]
= [s,\ast(F_A\wedge\omega)]-[s,\nabla_A\phi]\\ 
&= [s,\ast(F_A\wedge\omega)-\nabla_A\phi]. 
\end{align*} 
Hence, if $(A,\phi)$ satisfies \eqref{eq:monopole 5}, then $d_2\circ d_1=0$. \end{proof} 
\begin{remark} 
The deformation complex \eqref{eq:MonopoleComplex}  is not elliptic in the usual sense. Indeed, its principal symbol at a covector $\zeta\in T_x^\ast M\setminus\{0\}$ is given by 
$$ 
\sigma_\zeta(d_1)(s)=(0,-\zeta s), \qquad \sigma_\zeta(d_2)(\varphi,a) = \ast(\zeta\wedge a\wedge\omega)-\zeta\,\varphi. 
$$ 
If $\zeta$ is horizontal, a direct computation in a local coframe adapted to the contact structure shows that the image of $\sigma_\zeta(d_2)$ is contained in a proper subspace of $T_x^\ast M\otimes\mathfrak g_P$. In particular, $\sigma_\zeta(d_2)$ is not surjective, and hence the sequence of symbols is not exact.  
\end{remark} 
The failure of ellipticity reflects the fact that the operator
$\alpha\mapsto \ast(\alpha\wedge\omega)$ does not detect all curvature components, in particular the primitive  $(1,1)$-component of a horizontal $2$-form. Hence, in contrast with the $G_2$-monopole setting \cite{oliveira2014monopoles}, the contact monopole equation does not directly provide a standard elliptic deformation theory for smooth monopoles. For this reason, we do not pursue a general smooth moduli
space theory here. Instead, we focus on geometric constructions and explicit
reductions, first on unit tangent sphere bundles and then on the homogeneous
model $T_1S^3$. 
%----------------------------------------------------------------
\section{Contact monopoles on unit tangent sphere bundles}\label{sec:6}
%-------------------------------------------------------------
Throughout this section, $(N^3,g_N)$ denotes an oriented Riemannian
$3$-manifold. Its unit tangent sphere bundle $T_1N$ is the space of all unit
tangent vectors in $TN$.   The canonical projection will be denoted by
\begin{equation}\label{eq:canonicalprojection}
\pi:T_1N\longrightarrow N.
\end{equation}
The canonical contact form is induced by the tautological $1$-form,
$\eta_{(x,u)}(Z)=g_N(u,d\pi(Z))$. 
%--------------------------------------------------------------------
\subsection{The canonical contact metric structure on unit tangent  bundles}\label{sec:5}
%-------------------------------------------------------------------
The unit tangent sphere  bundle $T_1N$ \eqref{eq:canonicalprojection}  is a $5$-dimensional manifold carrying a canonical contact structure   \cite{Tashiro1969,Blair2002,CalvarusoUnitTangent}.
The Levi--Civita connection of $(N^3,g_N)$ determines a splitting of the tangent bundle of $TN$ into horizontal and vertical subbundles
$$
TTN = H \oplus V.
$$
If $X$ is a vector field on $N$, we denote by $X^h$ and $X^v$ its horizontal
and vertical lifts, respectively. At  $(x,u)\in T_1N$, the tangent space
to the fiber  sphere consists of those vertical vectors orthogonal to $u$. Hence  the vertical lift $X^v$ is tangent to $T_1N$ only after removing its component in the radial direction $u$. This leads to the tangential lift
$$
X^t_{(x,u)}
=
\bigl(X_x-g_N(X_x,u)u\bigr)^v.
$$
Thus, at a point $(x,u)\in T_1N$, the tangent space of $T_1N$ is spanned by
horizontal lifts and tangential vertical lifts. The canonical contact form on $T_1N$ is given by
\begin{equation}\label{eq:canonical-contact-form}
\eta_{(x,u)}(Z)=g_N(u,d\pi(Z)), \;\text{ for $Z\in T_{(x,u)}T_1N$.}
\end{equation}
Equivalently, $\eta$ is the restriction to $T_1N$ of the tautological $1$-form $\Theta\in\Omega^1(TN)$ defined by
\begin{equation}\label{eq:tautological-form}
\Theta_{(x,u)}(Z)=g_N(u,d\pi(Z)).
\end{equation}
Its Reeb vector field is the vector field generating the geodesic flow on
$T_1N$, namely the horizontal lift of the unit vector $u$:
\begin{equation}\label{eq:reeb-unit-tangent}
\xi_{(x,u)}=u^h.
\end{equation}
Therefore, the Reeb flow on $T_1N$ is precisely the geodesic flow of $(N,g_N)$. We now recall the metric and tensor fields entering the standard contact metric structure on $T_1N$ \cite{Tashiro1969,Blair2002,CalvarusoUnitTangent}. For vector fields $X,Y$ on $N$, the Sasaki metric\footnote{Here ``Sasaki metric'' refers to the natural Riemannian metric on the tangent bundle $TN$ induced by $g_N$ and the Levi--Civita connection. It should not be confused with a Sasakian metric: the manifold $TN$ has even dimension, whereas Sasakian manifolds are odd-dimensional.} on $TN$ is defined by
$$
g_S(X^h,Y^h)=g_N(X,Y)=g_S(X^v,Y^v),\qquad g_S(X^h,Y^v)=0.
$$ 
By restriction to the unit tangent bundle $T_1N$, and up to the
conventional homothetic normalization, one obtains a Riemannian metric on
$T_1N$, denoted by $g$. The \textit{Sasaki metric} on $TN$ carries a natural almost complex structure $J:TTN\to TTN$ defined by
$$
J(X^h)=X^v,
\qquad
J(X^v)=-X^h.
$$
The induced contact metric structure on the hypersurface $T_1N\subset TN$
is obtained by projecting $J$ tangentially onto $TT_1N$. More precisely,
if $X\in T_xN$ satisfies $g_N(X,u)=0$, then
$$
\Phi(X^h)=X^t,
\qquad
\Phi(X^t)=-X^h,
\qquad
\Phi(\xi)=0.
$$
Then 
$(\eta,\xi,\Phi,g)$ defines the standard contact metric structure on $T_1N$ \cite{Blair2002,CalvarusoUnitTangent}.

The following identities are standard for the canonical contact structure on
unit tangent sphere bundles; see, for example, \cite{Tashiro1969,Blair2002,CalvarusoUnitTangent}. We include a brief proof in order to fix the sign conventions used later. 
\begin{lemma}\label{lem:deta-unit-tangent}
Let $(N^3,g_N)$ be an oriented Riemannian $3$-manifold and let
$T_1N$ be its unit tangent sphere bundle \eqref{eq:canonicalprojection}, endowed with the canonical contact form $\eta$ \eqref{eq:canonical-contact-form}. For vector fields $X,Y$ on $N$, the differential of $\eta$ satisfies
$$
d\eta(X^h,Y^h)=0, \qquad d\eta(X^t,Y^t)=0,
\quad\text{and} \quad d\eta(X^h,Y^t) = -g_N(X,Y)+g_N(X,u)g_N(Y,u).
$$
In particular, if $X,Y\perp u$, then $d\eta(X^h,Y^t)=-g_N(X,Y).$
\end{lemma}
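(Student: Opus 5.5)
Since $\eta$ is the restriction to $T_1N$ of the tautological form $\Theta$ on $TN$, and horizontal lifts and tangential lifts of vector fields on $N$ are tangent to $T_1N$ (we may take $X^t$ as the tangential projection of $X^v$), the plan is to compute $d\Theta$ on $TN$ and then restrict. I will use the invariant formula
\[
d\Theta(Z,W)=Z\Theta(W)-W\Theta(Z)-\Theta([Z,W]).
\]
Here I fix the convention without the factor $\tfrac12$; this is the convention under which the stated signs should come out, and a different normalization only rescales the result. The inputs are the standard facts $d\pi(X^h)=X$, $d\pi(X^v)=0$, and the bracket relations on $TN$:
\[
[X^h,Y^h]=[X,Y]^h-(R(X,Y)u)^v,\qquad [X^h,Y^v]=(\nabla_XY)^v,\qquad [X^v,Y^v]=0.
\]
Consequently $\Theta(X^h)_{(x,u)}=g_N(u,X)$ and $\Theta(\text{vertical})=0$.

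\textbf{Horizontal--horizontal.} The function $(x,u)\mapsto g_N(u,X_x)$ is differentiated along $Y^h$. Along a horizontal curve, $u$ is parallel, so $Y^h(g_N(u,X))=g_N(u,\nabla_YX)$. Therefore
\[
d\Theta(X^h,Y^h)=g_N(u,\nabla_XY)-g_N(u,\nabla_YX)-g_N(u,[X,Y]),
\]
since the vertical curvature term is killed by $\Theta$. This vanishes because the Levi--Civita connection is torsion free.

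\textbf{Vertical--vertical.} Both $\Theta(X^v)$ and $\Theta(Y^v)$ vanish, and the bracket $[X^v,Y^v]=0$ is vertical, so $d\Theta(X^v,Y^v)=0$. For tangential lifts, $X^t=X^v-g_N(X,u)U$ with $U=u^v$ the radial field, and the brackets of such fields remain vertical. Hence $d\eta(X^t,Y^t)=0$.

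\textbf{Mixed case.} This step needs the most care, because the sign and the correction term $g_N(X,u)g_N(Y,u)$ come from here. On $TN$ I compute
\[
d\Theta(X^h,Y^v)=X^h\Theta(Y^v)-Y^v\Theta(X^h)-\Theta((\nabla_XY)^v).
\]
The first and last terms vanish. In the middle term, differentiating $g_N(u,X)$ in the fiber direction $Y$ gives $Y^v(g_N(u,X))=g_N(Y,X)$. So
\[
d\Theta(X^h,Y^v)=-g_N(X,Y).
\]
By bilinearity, replacing $Y^v$ by $Y^t=Y^v-g_N(Y,u)U$ gives
\[
d\Theta(X^h,Y^t)=-g_N(X,Y)+g_N(Y,u)\,d\Theta(X^h,U)=-g_N(X,Y)+g_N(Y,u)g_N(X,u).
\]
The coefficient $g_N(Y,u)$ is a function, but that is harmless since $d\Theta$ is tensorial. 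Restricting to $T_1N$ gives the claimed formula for $d\eta(X^h,Y^t)$, and the special case $X,Y\perp u$ follows immediately.

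The main obstacle I expect is bookkeeping of conventions rather than any conceptual difficulty: the sign conventions for $d$, the homothetic normalization of $g$ (which does not affect $d\eta$), and the correct fiber derivative $Y^v(g_N(u,X))=g_N(X,Y)$.
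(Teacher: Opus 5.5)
Your proof is correct and follows the same route as the paper: Cartan's formula with $[X^h,Y^v]=(\nabla_XY)^v$ gives $d\Theta(X^h,Y^v)=-g_N(X,Y)$, and you then substitute $Y^t=Y^v-g_N(Y,u)U$. You also justify the vanishing of the pure horizontal term (via torsion-freeness) and the pure vertical term, which the paper only asserts. There is one harmless slip in the mixed case: the middle expression should read $-g_N(X,Y)-g_N(Y,u)\,d\Theta(X^h,U)$, with $d\Theta(X^h,U)=-g_N(X,u)$; your two sign errors cancel, so the final formula is right.
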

\begin{proof}
The identities are standard for the canonical contact form on $T_1N$. We give  the short argument for completeness and to fix signs. Recall that $\eta$ is the restriction to $T_1N$ of the tautological $1$-form $\Theta$ on $TN$ \eqref{eq:tautological-form}. Thus, for a horizontal lift $X^h$ and a vertical lift $Y^v$, one has
$$
\Theta(X^h)=g_N(u,X),
\qquad
\Theta(Y^v)=0.
$$
Using Cartan's formula and the standard identity
$[X^h,Y^v]=(\nabla_XY)^v$, we obtain
$$
d\Theta(X^h,Y^v)=-g_N(X,Y).
$$
Since $Y^v$ is not necessarily tangent to $T_1N$, we replace it by the
tangential lift $Y^t=(Y-g_N(Y,u)u)^v$. Therefore
\begin{align*}
d\eta(X^h,Y^t)
&= d\Theta\bigl(X^h,(Y-g_N(Y,u)u)^v\bigr) = -g_N\bigl(X,Y-g_N(Y,u)u\bigr)\\
&= -g_N(X,Y)+g_N(X,u)g_N(Y,u).
\end{align*}
The pure horizontal and pure tangential vertical terms vanish by the same
standard structure equations for the horizontal--vertical splitting. Hence
$$
d\eta(X^h,Y^h)=0, \qquad d\eta(X^t,Y^t)=0.
$$
This proves the lemma.
\end{proof}  
%-------------------------------------------------------------
\subsection{The local contact monopole system on $T_1N$}
%-------------------------------------------------------------
Let $(x,u)\in T_1N$ and choose an orthonormal basis $\{e_1,e_2,u\}$ of $T_xN$, where $e_1,e_2\perp u$. We define the adapted frame
\begin{equation}\label{eq:adaptedFrame}
E_1=e_1^h,\; E_2=e_2^h,\; E_3=e_1^t,\; E_4=e_2^t,\; E_5=\xi=u^h.    
\end{equation} 
Then $\{E_1,E_2,E_3,E_4,E_5\}$ is a local frame on $T_1N$ adapted to the contact metric structure. We denote by $\{E^1,E^2,E^3,E^4,\eta\}$ the dual coframe, characterized by
\begin{equation}\label{eq:coframe}
E^i(E_j)=\delta^i_j,\qquad \eta(E_5)=1, \qquad \eta(E_i)=0,\quad i=1,\cdots,4.    
\end{equation}
It follows from Lemma \ref{lem:deta-unit-tangent} that, with our convention, the fundamental $2$-form is given by \begin{equation}\label{eq:detaadapted} 
d\eta=-(E^1\wedge E^3+E^2\wedge E^4). 
\end{equation} 
Consequently,  $\frac12 d\eta\wedge d\eta=-E^{1234}.$ 
We choose the orientation determined by the ordered coframe 
$\{E^1,E^2,E^3,E^4,\eta\}$. Thus the Riemannian volume form is \begin{equation}\label{eq:volume-adapted-frame} 
\vol_g=E^{1234}\wedge\eta = -\frac12 d\eta\wedge d\eta\wedge\eta. 
\end{equation}
We now compute the contact monopole equation in the adapted frame
\eqref{eq:adaptedFrame}. Let $A$ be a connection on a principal bundle over
$T_1N$, with curvature $F_A\in\Omega^2(T_1N,\mathfrak g_P).$
According to  \eqref{eq:kformsdecomposition}, the curvature $F_A$ decomposes as $F_A=F_H+F_\eta$, where 
\begin{equation}\label{eq:FH}
F_H = F_{12}E^{12} +F_{13}E^{13} +F_{14}E^{14} +F_{23}E^{23} +F_{24}E^{24}
+F_{34}E^{34}.
\end{equation}
and 
\begin{equation}\label{eq:Feta}
F_\eta = \left(F_{15}E^1 +F_{25}E^2 +F_{35}E^3 +F_{45}E^4\right)\wedge\eta,
\end{equation}
where $E^{ij}=E^i\wedge E^j,$ and  $ F_{i5}\in\Omega^0(T_1N,\mathfrak g_P).$  Using \eqref{eq:detaadapted}, we compute
\begin{align*}
F_A\wedge d\eta
&= -F_A\wedge E^{13}-F_A\wedge E^{24} \\
&= (F_{24}+F_{13})E^{1234} + (F_{25}E^{123} -F_{45}E^{134} -F_{15}E^{124} +F_{35}E^{234})\wedge\eta.
\end{align*}
From the above computation, the left-hand side of \eqref{eq:monopole 5} is
\begin{equation}\label{eq:astF_Aomega}
\begin{aligned}
\ast(F_A\wedge d\eta) &= (F_{24}+F_{13})\eta -F_{25}E^4 +F_{45}E^2 -F_{15}E^3 +F_{35}E^1  \\
&= F_{35}E^1 +F_{45}E^2 -F_{15}E^3 -F_{25}E^4    +(F_{24}+F_{13})\eta.
\end{aligned}    
\end{equation}
On the other hand, with respect to the adapted coframe \eqref{eq:coframe}, the covariant derivative of the Higgs field decomposes as
\begin{equation}\label{eq:cov-derivative-higgs-adapted}
\nabla_A\phi = (\nabla_{A,E_1}\phi)E^1 + (\nabla_{A,E_2}\phi)E^2
+ (\nabla_{A,E_3}\phi)E^3 + (\nabla_{A,E_4}\phi)E^4 + (\nabla_{A,\xi}\phi)\eta.
\end{equation}
The equality $\ast(F_A\wedge d\eta)=\nabla_A\phi$ in \eqref{eq:monopole 5} holds if and only if the corresponding coefficients of \eqref{eq:astF_Aomega} and \eqref{eq:cov-derivative-higgs-adapted} agree. We summarize this in the following local characterization result.
\begin{proposition}\label{prop:local-monopole-system}
Let $T_1N$ be the unit tangent sphere bundle of an oriented Riemannian
$3$-manifold $(N^3,g_N)$, with canonical projection
$\pi:T_1N\to N$ as in \eqref{eq:canonicalprojection}, endowed with its canonical contact metric structure \eqref{eq:canonical-contact-form}--\eqref{eq:reeb-unit-tangent}. Let $A$ be a connection  on a principal bundle over $T_1N$ and let $F_A=F_H+F_\eta$ be the decomposition \eqref{eq:FH}--\eqref{eq:Feta}. Then, with respect to the adapted frame \eqref{eq:adaptedFrame},  the contact monopole equation \eqref{eq:monopole 5} is equivalent to the system
\begin{equation}\label{eq:local-monopole-system}
\begin{aligned}
\nabla_{A,E_1}\phi &= F_{35}, & \nabla_{A,E_2}\phi &= F_{45},&\nabla_{A,\xi}\phi &= F_{24}+F_{13}.\\
\nabla_{A,E_3}\phi &= -F_{15},& \nabla_{A,E_4}\phi &= -F_{25}.
\end{aligned}
\end{equation}
\end{proposition}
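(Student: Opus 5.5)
The plan is to verify the computation preceding the statement and then compare coefficients. I will carry it out in three steps.

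First, I will expand the wedge product $F_A\wedge d\eta$ using \eqref{eq:detaadapted}, so that $F_A\wedge d\eta=-F_A\wedge E^{13}-F_A\wedge E^{24}$. For the horizontal part \eqref{eq:FH}, only $F_{24}E^{24}\wedge(-E^{13})$ and $F_{13}E^{13}\wedge(-E^{24})$ survive. Since $E^{2413}=E^{1234}$ (an even permutation), the horizontal contribution is $(F_{13}+F_{24})E^{1234}$. For the part $F_\eta$ in \eqref{eq:Feta}, each term $F_{i5}E^i\wedge\eta$ wedged with $-E^{13}-E^{24}$ gives a $3$-form times $\eta$. Only the $E^i$ not already appearing survive: $i=2,4$ pair with $E^{13}$, and $i=1,3$ pair with $E^{24}$. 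Reordering $E^i\wedge\eta\wedge E^{jk}=E^{ijk}\wedge\eta$ gives the four terms displayed before the statement; I will check each sign by counting transpositions.

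Second, I will compute the Hodge star with respect to the orientation and volume form \eqref{eq:volume-adapted-frame}, using that the adapted coframe is orthonormal. For this I use $\ast\alpha=\beta$ characterized by $\gamma\wedge\beta=\langle\gamma,\alpha\rangle\vol_g$. The needed facts are $\ast E^{1234}=\eta$, and $\ast(E^{ijk}\wedge\eta)=\pm E^l$, where $l$ is the missing index and the sign is that of the permutation $(l,i,j,k)$ relative to $(1,2,3,4)$, i.e.\ $E^l\wedge E^{ijk}\wedge\eta=\pm\vol_g$. For instance, $E^4\wedge E^{123}=-E^{1234}$ gives $\ast(E^{123}\wedge\eta)=-E^4$. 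Likewise $\ast(E^{134}\wedge\eta)=E^2$, $\ast(E^{124}\wedge\eta)=E^3$, and $\ast(E^{234}\wedge\eta)=E^1$. Substituting yields \eqref{eq:astF_Aomega}.

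Third, I will expand $\nabla_A\phi$ in the same coframe as in \eqref{eq:cov-derivative-higgs-adapted}. Since $\{E^1,\dots,E^4,\eta\}$ is a pointwise basis of $T^\ast T_1N$, two $\mathfrak g_P$-valued $1$-forms agree if and only if their coefficients agree. Matching the coefficients of $E^1,E^2,E^3,E^4,\eta$ gives exactly \eqref{eq:local-monopole-system}, in both directions of the equivalence.

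The only real obstacle is sign bookkeeping in the first two steps: permutation signs, the minus sign in \eqref{eq:detaadapted}, and the orientation convention. I will handle it by tabulating each of the four $\eta$-terms separately and double-checking one case via $\gamma\wedge\ast\gamma=|\gamma|^2\vol_g$.
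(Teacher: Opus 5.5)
Your strategy is the paper's: expand $F_A\wedge d\eta$ using $d\eta=-(E^{13}+E^{24})$, apply $\ast$ in the orthonormal coframe with $\vol_g=E^{1234}\wedge\eta$, and compare coefficients. The proposition's entire content is sign bookkeeping, so the specific signs you commit to matter, and two of them are wrong.

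\textbf{First sign.} The permutation $(2,4,1,3)$ has three inversions, so $E^{2413}=-E^{1234}$, not $+E^{1234}$. Equivalently, $E^{13}\wedge E^{24}=E^{24}\wedge E^{13}=-E^{1234}$, which is consistent with $\tfrac12 d\eta\wedge d\eta=-E^{1234}$. Your conclusion $(F_{13}+F_{24})E^{1234}$ is still correct, but only because this minus sign cancels the one in $d\eta$. With the parity you stated, the $F_{24}$-term would come out as $-F_{24}E^{1234}$, giving $(F_{13}-F_{24})E^{1234}$.

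\textbf{Second sign, which actually breaks the conclusion.} You assert $\ast(E^{134}\wedge\eta)=E^2$, but in fact $\ast(E^{134}\wedge\eta)=-E^2$. By your own rule the relevant permutation is $(2,1,3,4)$, which is odd, so $E^2\wedge E^{134}\wedge\eta=-\vol_g$. The $\eta$-part of $F_A\wedge d\eta$ contains the term $-F_{45}E^{134}\wedge\eta$. With your value, the $E^2$-coefficient of $\ast(F_A\wedge d\eta)$ would be $-F_{45}$, giving $\nabla_{A,E_2}\phi=-F_{45}$. That contradicts the system in the statement, so your claim that substituting yields \eqref{eq:astF_Aomega} is false as written. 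The check $\gamma\wedge\ast\gamma=|\gamma|^2\vol_g$ that you planned, applied to $\gamma=E^{134}\wedge\eta$, would have caught this.

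\textbf{Corrected computation.} Use
$F_A\wedge d\eta=(F_{13}+F_{24})E^{1234}+(F_{25}E^{123}-F_{45}E^{134}-F_{15}E^{124}+F_{35}E^{234})\wedge\eta$
together with
$\ast E^{1234}=\eta$, $\ast(E^{123}\wedge\eta)=-E^4$, $\ast(E^{134}\wedge\eta)=-E^2$, $\ast(E^{124}\wedge\eta)=E^3$, $\ast(E^{234}\wedge\eta)=E^1$.
This gives exactly \eqref{eq:astF_Aomega}. Comparing coefficients in the coframe $\{E^1,\dots,E^4,\eta\}$ then yields the stated system in both directions, as in the paper. The method is sound; only these two signs need fixing.
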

\begin{corollary}\label{cor:parallel-higgs-components}
Under the assumptions of Proposition \ref{prop:local-monopole-system},
if the Higgs field $\phi$ is $A$-parallel, then the contact monopole equation
is equivalent to
$$
F_{15}=F_{25}=F_{35}=F_{45}=F_{13}+F_{24}=0.
$$ 
\end{corollary}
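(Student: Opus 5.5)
The plan is to specialize the local system of Proposition \ref{prop:local-monopole-system} to the case of a parallel Higgs field. If $\phi$ is $A$-parallel, then by definition $\nabla_A\phi=0$. In the adapted coframe \eqref{eq:coframe}, the expansion \eqref{eq:cov-derivative-higgs-adapted} then says that every component vanishes:
$$
\nabla_{A,E_1}\phi=\nabla_{A,E_2}\phi=\nabla_{A,E_3}\phi=\nabla_{A,E_4}\phi=\nabla_{A,\xi}\phi=0 .
$$

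Next, substitute these into the five equations of \eqref{eq:local-monopole-system}. They become
$$
0=F_{35},\quad 0=F_{45},\quad 0=-F_{15},\quad 0=-F_{25},\quad 0=F_{24}+F_{13}.
$$
Proposition \ref{prop:local-monopole-system} says the contact monopole equation is equivalent to the full system \eqref{eq:local-monopole-system}. So, for this fixed parallel $\phi$, the monopole equation is equivalent to the listed vanishing conditions, and the signs are irrelevant.

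For the converse direction, assume the five curvature conditions hold. By \eqref{eq:astF_Aomega}, every coefficient of $\ast(F_A\wedge d\eta)$ then vanishes, so $\ast(F_A\wedge d\eta)=0=\nabla_A\phi$, which is \eqref{eq:monopole 5}.

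There is no real obstacle here. The only point needing care is that $\{E^1,\dots,E^4,\eta\}$ is a genuine coframe. This guarantees that comparing coefficients is legitimate, and it is already built into Proposition \ref{prop:local-monopole-system}.
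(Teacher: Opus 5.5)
Your proposal is correct and is the same argument the paper has in mind: the paper states the corollary without separate proof, as an immediate consequence of Proposition \ref{prop:local-monopole-system}, and setting every component of $\nabla_A\phi$ to zero in \eqref{eq:local-monopole-system} gives exactly the five vanishing conditions. Your separate converse paragraph is redundant, since the proposition already gives an equivalence, but it is harmless.
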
 
%---------------------------------------------------------------------
\subsection{A rigidity result for pure pullbacks}
\label{subsec:pure-pullback-rigidity}
%---------------------------------------------------------------------
We record a rigidity result showing that pure pullback configurations from the base do not produce nontrivial contact monopoles.
\begin{proposition}\label{prop:pure-pullback-rigidity}
Let $T_1N$ be the unit tangent bundle \eqref{eq:canonicalprojection} over an oriented Riemannian $3$-manifold $N$. Let $P_N\to N$ be a principal $G$-bundle with connection $a$, and let $\Phi\in\Omega^0(N,\mathfrak g_{P_N})$. Consider the pullback bundle $P:=\pi^\ast P_N\to T_1N$, with pullback connection $A:=\pi^\ast a$ and pullback Higgs field $\phi:=\pi^\ast\Phi$. Then $(A,\phi)$ satisfies the contact monopole equation \eqref{eq:monopole 5} on $T_1N$ if and only if $ F_a=0$ and  $\nabla_a\Phi=0. $
\end{proposition}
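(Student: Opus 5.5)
We work pointwise in the adapted frame \eqref{eq:adaptedFrame} and apply Proposition \ref{prop:local-monopole-system}. The idea is that pullback data only see horizontal directions, while $E_3,E_4$ are vertical. Since $A=\pi^\ast a$, we have $F_A=\pi^\ast F_a$. Since $d\pi(X^t)=0$ and $d\pi(X^h)=X$, every component of $F_A$ with a vertical index vanishes:
$$F_{13}=F_{14}=F_{23}=F_{24}=F_{34}=F_{35}=F_{45}=0.$$
The remaining components are
$$F_{12}=F_a(e_1,e_2),\qquad F_{15}=F_a(e_1,u),\qquad F_{25}=F_a(e_2,u),$$
all evaluated at $x=\pi(x,u)$. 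In the same way $\nabla_A\phi=\pi^\ast(\nabla_a\Phi)$. Hence $\nabla_{A,E_3}\phi=\nabla_{A,E_4}\phi=0$, and the horizontal components are $\nabla_{A,E_i}\phi=(\nabla_{a,e_i}\Phi)(x)$ for $i=1,2$, together with $\nabla_{A,\xi}\phi=(\nabla_{a,u}\Phi)(x)$.

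Substituting into \eqref{eq:local-monopole-system}, the five equations become
$$\nabla_{a,e_1}\Phi=0,\quad \nabla_{a,e_2}\Phi=0,\quad \nabla_{a,u}\Phi=0,\quad F_a(e_1,u)=0,\quad F_a(e_2,u)=0,$$
all at the point $x$.

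The first three equations, taken over the orthonormal basis $\{e_1,e_2,u\}$ of $T_xN$, give $\nabla_a\Phi=0$ at $x$. The last two say that $\iota_uF_a=0$ at $x$. This must hold for every unit $u\in T_xN$, because the fibre of $T_1N$ over $x$ is all of $S^2\subset T_xN$. Therefore $F_a(v,w)=0$ for all $v,w$ (take $u=v/|v|$), so $F_a=0$. Note that $F_{12}$ never enters the equations, which is consistent with this conclusion.

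Conversely, if $F_a=0$ and $\nabla_a\Phi=0$, then $F_A=0$ and $\nabla_A\phi=0$, so \eqref{eq:monopole 5} holds trivially.

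The only step needing care is the identification of the frame components of pulled-back forms. One must check that $d\pi(E_3)=d\pi(E_4)=0$ and $d\pi(E_5)=u$. It must also be confirmed that evaluating $\pi^\ast\nabla_a\Phi$ on horizontal lifts gives the base covariant derivative, which is immediate from the chain rule. No derivative of the frame itself enters, since everything is evaluated pointwise. The key observation is that $u$ varies freely over the fibre, which upgrades $\iota_uF_a=0$ to full flatness.
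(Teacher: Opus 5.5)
Your proof is correct and follows the paper's argument essentially verbatim. Like the paper, you read off the frame components of $\pi^\ast F_a$ and $\pi^\ast\nabla_a\Phi$ using $d\pi(E_3)=d\pi(E_4)=0$ and $d\pi(E_5)=u$, substitute them into the local system \eqref{eq:local-monopole-system}, and use that $u$ ranges over the whole unit sphere in $T_xN$ to force $F_a=0$. The only difference is cosmetic: you get $\iota_uF_a=0$ directly from the basis $\{e_1,e_2,u\}$ and then rescale $u=v/|v|$, whereas the paper decomposes $v=v^\perp+\lambda u$.
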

\begin{proof}
Assume first that $(A,\phi)$ satisfies the contact monopole equation
\eqref{eq:monopole 5}. Let $(x,u)\in T_1N$ and choose an orthonormal basis
$\{e_1,e_2,u\}$ of $T_xN$. In the adapted frame \eqref{eq:adaptedFrame}, we have
$$
d\pi(E_1)=e_1,\quad
d\pi(E_2)=e_2,\quad
d\pi(E_3)=0,\quad
d\pi(E_4)=0,\quad
d\pi(E_5)=u.
$$
Since $A=\pi^\ast a$, the curvature satisfies   $F_A(E_i,E_j)=F_a(d\pi(E_i),d\pi(E_j))$. Therefore
$$
F_{35}=F_{45}=F_{13}=F_{24}=0,
$$
while $F_{15}=F_a(e_1,u)$ and  $F_{25}=F_a(e_2,u).$ 
Similarly, since $\phi=\pi^\ast\Phi$, we have 
$$
(\nabla_A\phi)_{(x,u)}(E_i)=(\nabla_a\Phi)_x\bigl(d\pi_{(x,u)}(E_i)\bigr).
$$
Hence
$$
\nabla_{A,E_1}\phi=\nabla_{a,e_1}\Phi, \qquad \nabla_{A,E_2}\phi=\nabla_{a,e_2}\Phi, \qquad
\nabla_{A,E_5}\phi=\nabla_{a,u}\Phi,
$$
and $\nabla_{A,E_3}\phi=\nabla_{A,E_4}\phi=0.$ Using the system \eqref{eq:local-monopole-system} in Proposition \ref{prop:local-monopole-system}, the contact monopole equation is equivalent to
\begin{equation}\label{eq:nablaNegativeresult}
\nabla_{a,e_1}\Phi=0,\qquad
\nabla_{a,e_2}\Phi=0,\qquad
\nabla_{a,u}\Phi=0,
\end{equation}
and $F_a(e_1,u)=0$ and $F_a(e_2,u)=0.$ Since the contact monopole equation holds at every point $(x,u)\in T_1N$, the previous identities hold for every unit vector $u\in T_xN$. Thus, for  each such $u$ and for all 
$v\perp u$  
$$
F_a(v,u)=0.
$$
Now let $v\in T_xN$ be arbitrary. We decompose
$v=v^\perp+\lambda u$, with $v^\perp\perp u$. Since $F_a$ is a $2$-form,
$$
F_a(v,u)=F_a(v^\perp,u)+\lambda F_a(u,u)=0.
$$
Hence $F_a(v,u)=0$ for all $v\in T_xN$ and all unit vectors $u\in T_xN$.
Since $u$ is arbitrary, it follows that $F_a=0.$ Similarly, the identities \eqref{eq:nablaNegativeresult} hold for an orthonormal basis 
$\{e_1,e_2,u\}$ of $T_xN$, and therefore
$\nabla_a\Phi=0.$

Conversely, suppose that $F_a=0$ and $\nabla_a\Phi=0$. Then
$$
F_A=\pi^\ast F_a=0, \qquad \nabla_A\phi=\pi^\ast(\nabla_a\Phi)=0.
$$
Therefore $\ast(F_A\wedge d\eta)=0=\nabla_A\phi,$ so $(A,\phi)$ satisfies the contact monopole equation.
\end{proof}
Proposition \ref{prop:pure-pullback-rigidity} shows that the contact monopole equation on $T_1N$ cannot be obtained from the classical Bogomolny equation on $N$ by a naive pullback construction. Indeed, pure pullbacks solve the contact monopole equation only in the flat and parallel case. Thus the equation defines a genuinely $5$-dimensional gauge-theoretic problem on $T_1N$. 
%---------------------------------------------------------------------
\section{Homogeneous contact monopoles on $T_1S^3$}\label{sec:7}
%---------------------------------------------------------------------
We now specialize to the round sphere $N=S^3$. In this case, 
\begin{equation}\label{eq:T1S3stiefel}
T_1S^3\cong V_2(\mathbb R^4)\cong SO(4)/SO(2).
\end{equation} 
The canonical contact metric structure is homogeneous \cite{CalvarusoPerrone2010}. This homogeneous description allows us to study invariant contact monopoles by means of homogeneous principal bundles and Wang's theorem.  We first fix a reductive model for $V_2(\mathbb R^4)$ compatible with the canonical contact structure. We then recall Wang's theorem and reduce the contact monopole equation to explicit finite-dimensional algebraic systems. 
%---------------------------------------------------------------------
\subsection{A reductive model for the homogeneous contact structure on $T_1S^3$}\label{sec:homogeneous-reductive-model}
%---------------------------------------------------------------------
By Tashiro's theorem \cite{Tashiro1969}, the unit tangent sphere bundle
$T_1N$ is K-contact if and only if $(N,g_N)$ has constant sectional curvature $1$; in this case, $T_1N$ is Sasakian
\cite{Tashiro1969,Blair2002,CalvarusoUnitTangent}. Hence, for the round
sphere $S^3$, the unit tangent sphere bundle $T_1S^3$ \eqref{eq:T1S3stiefel} is a Sasakian $5$-manifold.  

We now fix a reductive model for this homogeneous space, compatible with the canonical contact metric structure.  Let $E_{ij}$ denote the elementary $4\times4$ matrix with a $1$ in the $(i,j)$-entry and zeros elsewhere. We fix the following basis of the Lie algebra $\so(4)$:
\begin{align}\label{eq:so4-basis}
\begin{split}
e_1&=E_{23}-E_{32},\qquad e_2=E_{24}-E_{42},\qquad e_3=E_{13}-E_{31},\\[0.3em]
e_4&=E_{14}-E_{41},\qquad e_5=E_{12}-E_{21},\qquad e_6=E_{34}-E_{43}.
\end{split}
\end{align}
The isotropy algebra is $\fh=\Span\{e_6\}\cong\so(2),$ and we take the reductive complement $\so(4)=\fh\oplus\fm,$ where 
$\fm=\Span\{e_1,e_2,e_3,e_4,e_5\}.$ The Lie bracket of $\so(4)$ with respect to the basis \eqref{eq:so4-basis} is given in Table \ref{tab:so4-bracket}.
\begin{table}[h]
\begin{center}
\newcolumntype{M}{>{$}c<{$}}
\begin{tabular}{|M||M|M|M|M|M|M|}
\hline
[\cdot,\cdot] & e_1 & e_2   & e_3  & e_4  & e_5  & e_6  \\ \hline \hline
e_1           & 0   & -e_6  & e_5  & 0    & -e_3 & e_2  \\ \hline
e_2           & e_6 & 0     & 0    & e_5  & -e_4 & -e_1 \\ \hline
e_3           & -e_5& 0     & 0    & -e_6 & e_1  & e_4  \\ \hline
e_4           & 0   & -e_5  & e_6  & 0    & e_2  & -e_3 \\ \hline
e_5           & e_3 & e_4   & -e_1 & -e_2 & 0    & 0    \\ \hline
e_6           & -e_2& e_1   & -e_4 & e_3  & 0    & 0    \\ \hline
\end{tabular}
\vspace{.5cm}
\caption{\label{tab:so4-bracket} Lie bracket of $\so(4)$ with respect to the basis \eqref{eq:so4-basis}.}
\end{center}
\end{table}
On $\so(4)$ we use the inner product 
$$
Q(a,b)=\frac12\operatorname{tr}(ab^T).
$$
With respect to $Q$, the basis \eqref{eq:so4-basis} is orthonormal. We set
$ X_i=e_i,$  $i=1,\dots,5, $ and denote by $\{X^1,\dots,X^5\}$ the dual coframe.  In order to keep the notation consistent with the adapted frame \eqref{eq:adaptedFrame}, we consider the $\Ad(SO(2))$-invariant splitting
$$
\fm=\fm_1\oplus\fm_2\oplus\fm_3,
$$
where
$\fm_1=\Span\{X_1,X_2\},$  $\fm_2=\Span\{X_3,X_4\}$ 
and $\fm_3=\Span\{X_5\}$. The factor $\fm_1\oplus\fm_2$ identifies with the contact distribution, while $\fm_3$ corresponds to the Reeb direction, thus $\xi=X_5$. From Table \ref{tab:so4-bracket}, the nonzero $\fm$-components of brackets between basis elements of $\fm$ are
\begin{equation}\label{eq:so4-brackets-m}
\begin{aligned}
[X_1,X_3]_{\fm}&=X_5, & [X_2,X_4]_{\fm}&=X_5, & [X_1,X_5]_{\fm}&=-X_3,\\
[X_2,X_5]_{\fm}&=-X_4,& [X_3,X_5]_{\fm}&=X_1, & [X_4,X_5]_{\fm}&=X_2.
\end{aligned}
\end{equation}
The nonzero $\fh$-components of brackets between basis elements of $\fm$ are
\begin{equation}\label{eq:so4-brackets-h}
[X_1,X_2]_{\fh}=-e_6, \qquad [X_3,X_4]_{\fh}=-e_6.
\end{equation}
Finally, the infinitesimal isotropy action of $\fh$ on $\fm$ is given by
\begin{equation}\label{eq:so4-brackets-hm}
\begin{aligned}
[e_6,X_1]&=-X_2, & [e_6,X_2]&=X_1, & [e_6,X_3]&=-X_4,&
[e_6,X_4]&=X_3, & [e_6,X_5]&=0.
\end{aligned}
\end{equation}
Here $[\cdot,\cdot]_{\fm}$ and $[\cdot,\cdot]_{\fh}$ denote the projections of the Lie bracket in $\so(4)=\fh\oplus\fm$ onto $\fm$ and $\fh$, respectively. The homogeneous contact form is given by  $\eta=X^5.$ For invariant $1$-forms on a reductive homogeneous space, we have
$$
d\alpha(X,Y)=-\alpha([X,Y]_{\fm}).
$$
Therefore, using \eqref{eq:so4-brackets-m}, the exterior derivative of $\eta$ is
$$
d\eta=-(X^{13}+X^{24}), \qquad \text{where } X^{ij}=X^i\wedge X^j.
$$ 
The endomorphism $\Phi:\fm\to\fm$ is defined by
$$
\Phi X_1=X_3,\qquad \Phi X_3=-X_1,\qquad
\Phi X_2=X_4,\qquad \Phi X_4=-X_2,\qquad \Phi X_5=0.
$$
This convention is compatible with $d\eta(X,Y)=g(X,\Phi Y)$. Moreover, the orientation determined by the ordered coframe $\{X^1,X^2,X^3,X^4,X^5\}$ agrees with the orientation fixed in \eqref{eq:volume-adapted-frame}. Hence
$$
\vol_g=X^{12345}=-\frac12 d\eta\wedge d\eta\wedge\eta.
$$ 
The homogeneous structure above coincides with the canonical Sasakian
structure induced on the unit tangent sphere bundle of the round sphere.
%---------------------------------------------------------------------
\subsection{Homogeneous principal bundles and invariant connections}
\label{sec:10}
%---------------------------------------------------------------------
We recall some standard results about homogeneous principal bundles, following
\cite[Section~1.4]{cap2009}. Let $H\subset G$ be a closed subgroup of a Lie
group $G$, and let $M=G/H$ be the corresponding homogeneous space. We denote by
$$
L:G\times M\longrightarrow M
$$
the canonical left action and by $p:G\longrightarrow G/H$ the canonical $H$-principal bundle.
\begin{definition}
A \emph{homogeneous principal $K$-bundle} over $M=G/H$ is a principal
$K$-bundle $\pi:P\to M$ together with a left $G$-action $\widetilde L:G\times P\longrightarrow P$ lifting the action $L$ on $M$, that is,
$$
\pi(\widetilde L_g(y))=L_g(\pi(y)),
$$
and such that, for each $g\in G$, the map
$\widetilde L_g:P\to P$ is an automorphism of principal $K$-bundles.
\end{definition}
Homogeneous principal bundles are characterized as follows.
\begin{lemma}\cite[Lemma~1.4.5]{cap2009}
\label{lem:homogeneous-principal-bundles}
Let $G$ and $K$ be Lie groups and let $H\subset G$ be a closed Lie subgroup.
If $P\to G/H$ is a homogeneous principal $K$-bundle, then there exists a smooth
homomorphism $\lambda:H\rightarrow K$ such that $P\cong G\times_\lambda K.$ Moreover, two homomorphisms $\lambda,\widetilde\lambda:H\to K$ define  isomorphic homogeneous principal $K$-bundles if and only if they are conjugate
by an element of $K$.
\end{lemma}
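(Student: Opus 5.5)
The plan is to reconstruct the bundle from the action of the isotropy group on a single fibre. Fix the base point $o=eH\in G/H$ and a point $y_0\in P_o:=\pi^{-1}(o)$.

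\textbf{Defining $\lambda$.} For $h\in H$ the lifted map $\widetilde L_h$ covers $L_h$, which fixes $o$, so $\widetilde L_h(y_0)\in P_o$. Because $K$ acts simply transitively on the fibre, there is a unique $\lambda(h)\in K$ with
$$
\widetilde L_h(y_0)=y_0\cdot\lambda(h).
$$

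\textbf{$\lambda$ is a smooth homomorphism.} Since each $\widetilde L_g$ is an automorphism of principal $K$-bundles, it commutes with the right $K$-action. Therefore
$$
\widetilde L_{h_1h_2}(y_0)=\widetilde L_{h_1}\bigl(y_0\lambda(h_2)\bigr)=y_0\lambda(h_1)\lambda(h_2),
$$
which gives $\lambda(h_1h_2)=\lambda(h_1)\lambda(h_2)$. For smoothness, compose the smooth map $h\mapsto \widetilde L_h(y_0)\in P_o$ with the diffeomorphism $P_o\to K$, $y_0k\mapsto k$.

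\textbf{The isomorphism.} Define
$$
\Psi:G\times K\to P,\qquad \Psi(g,k)=\widetilde L_g(y_0)\cdot k .
$$
The right $H$-action defining $G\times_\lambda K$ is $(g,k)\cdot h=(gh,\lambda(h)^{-1}k)$, and $\Psi$ is invariant under it:
$$
\widetilde L_{gh}(y_0)\,\lambda(h)^{-1}k=\widetilde L_g\bigl(y_0\lambda(h)\bigr)\lambda(h)^{-1}k=\widetilde L_g(y_0)\,k .
$$
So $\Psi$ descends to a map $\bar\Psi:G\times_\lambda K\to P$. This map is $K$-equivariant, $G$-equivariant, and covers the identity of $G/H$.

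\textbf{$\bar\Psi$ is a diffeomorphism.} Surjectivity uses transitivity of $G$ on $G/H$ together with transitivity of $K$ on the fibres. For injectivity, suppose $\widetilde L_g(y_0)k=\widetilde L_{g'}(y_0)k'$. Then $g'=gh$ for some $h\in H$, and freeness of the $K$-action gives $k=\lambda(h)k'$, so the two points of $G\times_\lambda K$ coincide. A $K$-equivariant bijective smooth bundle map covering the identity is a diffeomorphism; this can be checked in local trivialisations, where it is fibrewise left multiplication by a smooth $K$-valued function.

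\textbf{Dependence on choices and the converse.} This part needs the most care with conventions. If $y_0$ is replaced by $y_0c$ with $c\in K$, then
$$
\widetilde L_h(y_0c)=y_0\lambda(h)c=(y_0c)\,c^{-1}\lambda(h)c,
$$
so $\lambda$ changes to $c^{-1}\lambda c$. Now let $F:P\to P'$ be an isomorphism of homogeneous bundles, meaning it is $G$- and $K$-equivariant. Applying $F$ to the defining identity shows that $F(y_0)$ produces the same $\lambda$. Hence isomorphic bundles give $K$-conjugate homomorphisms.

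Conversely, if $\widetilde\lambda=c^{-1}\lambda c$, the map $[g,k]\mapsto[g,c^{-1}k]$ is well defined, because
$$
[gh,\lambda(h)^{-1}k]\mapsto[gh,c^{-1}\lambda(h)^{-1}k]=[gh,\widetilde\lambda(h)^{-1}c^{-1}k].
$$
It is $G$- and $K$-equivariant, so it is the required isomorphism $G\times_\lambda K\to G\times_{\widetilde\lambda}K$.
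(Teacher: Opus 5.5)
Your proof is correct and is the standard argument: pick $y_0$ over $eH$, read off $\lambda$ from $\widetilde L_h(y_0)=y_0\lambda(h)$, descend $(g,k)\mapsto \widetilde L_g(y_0)k$ to an isomorphism $G\times_\lambda K\to P$, and track the change of base point $y_0\mapsto y_0c$. The paper gives no proof of its own and simply cites \v{C}ap--Slov\'ak \cite{cap2009}, where essentially this argument is used. The one step you leave implicit is that $G\times_\lambda K$ with base point $[e,e]$ returns exactly $\lambda$, since $\widetilde L_h[e,e]=[h,e]=[e,\lambda(h)]=[e,e]\cdot\lambda(h)$; this is what lets your statement that ``isomorphic bundles give conjugate homomorphisms'' conclude that $\lambda$ and $\widetilde\lambda$ themselves are conjugate.
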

In the present case, $T_1S^3\cong SO(4)/SO(2),$ so homogeneous principal $K$-bundles over $T_1S^3$ are determined, up to isomorphism, by conjugacy classes of homomorphisms $\lambda:SO(2)\longrightarrow K.$ The corresponding homogeneous bundle is
$$
P_\lambda=SO(4)\times_{\lambda}K\longrightarrow SO(4)/SO(2).
$$
Since $SO(2)\cong S^1$ is abelian, the image of any homomorphism
$\lambda:SO(2)\to K$ is an abelian compact subgroup of $K$. If $K$ is compact
and connected, then every such homomorphism is conjugate to one with image
contained in a maximal torus $T_K\subset K$. Equivalently, such homomorphisms
are described by integral one-parameter subgroups of $T_K$, modulo conjugation.
In the concrete cases considered below, this description becomes explicit for
$K=U(1)$ and $K=SO(3)$.

For a homogeneous principal $K$-bundle $P\to G/H$, a connection
$A\in\Omega^1(P,\mathfrak k)$ is called \emph{invariant} if
$$
\widetilde L_g^\ast A=A \qquad \text{for every } g\in G.
$$
We shall use Wang's correspondence for invariant connections on homogeneous
principal bundles \cite{wang1958invariant}, in the form presented in
\cite[Theorem~1.4.5]{cap2009}.
\begin{lemma}[Wang's theorem]\label{lem:wang-theorem-T1S3}
Let $P_\lambda=G\times_{\lambda}K\longrightarrow G/H$ be the homogeneous principal $K$-bundle induced by the homomorphism $\lambda:H\to K$. Then invariant principal connections on $P_\lambda$ are in bijective correspondence with linear maps $\alpha:\mathfrak g\longrightarrow\mathfrak k$ satisfying
\begin{itemize}
\item[(i)] $\alpha_{\vert\mathfrak h}=\lambda_\ast:\mathfrak h\to\mathfrak k$;

\item[(ii)] $\alpha\circ\Ad(h)=\Ad(\lambda(h))\circ\alpha$ for all $h\in H$.
\end{itemize}
\end{lemma}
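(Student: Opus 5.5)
To prove Wang's theorem I will pull an invariant connection back to $G$ along a canonical map and read off a linear map $\mathfrak g\to\mathfrak k$. Let $j:G\to P_\lambda=G\times_\lambda K$ be $j(g)=[g,e]$. Then $j(gh)=[g,\lambda(h)]=j(g)\cdot\lambda(h)$, so $j$ is a $\lambda$-equivariant map from the principal $H$-bundle $G\to G/H$ into $P_\lambda$ covering the identity of $G/H$. Moreover $j$ intertwines left multiplication on $G$ with the lifted action $\widetilde L$.

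Given an invariant connection $A$ on $P_\lambda$, form $j^\ast A\in\Omega^1(G,\mathfrak k)$. This form is left-invariant, because $j\circ L_g=\widetilde L_g\circ j$ and $\widetilde L_g^\ast A=A$. Hence it is determined by its value at the identity, $\alpha:=(j^\ast A)_e:\mathfrak g\to\mathfrak k$.

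Condition (i) follows from the reproduction of fundamental fields. For $X\in\mathfrak h$, $dj_e(X)$ is the fundamental vector field of $\lambda_\ast X$ at $[e,e]$. Since $A$ reproduces generators of fundamental fields, $\alpha(X)=\lambda_\ast X$.

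Condition (ii) follows from the equivariance $r_k^\ast A=\Ad(k^{-1})A$. Combined with $j\circ r_h=r_{\lambda(h)}\circ j$, it gives $r_h^\ast j^\ast A=\Ad(\lambda(h)^{-1})j^\ast A$. Evaluating at $e$ on left-invariant vectors, and using $dr_h(X^L)=(\Ad(h^{-1})X)^L$, yields $\alpha\circ\Ad(h^{-1})=\Ad(\lambda(h)^{-1})\circ\alpha$, which is (ii).

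Injectivity holds because $j(G)\cdot K=P_\lambda$, and $A$ is determined by its restriction to the image of $j$ together with $K$-equivariance. Concretely, $T_{[g,k]}P_\lambda$ is spanned by $dr_k\,dj(T_gG)$ and the vertical vectors.

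For surjectivity, I take $\alpha$ satisfying (i) and (ii) and let $\theta$ be the Maurer--Cartan form of $G$. On $G\times K$ I define
$$\widehat A_{(g,k)}=\Ad(k^{-1})\,\alpha(\theta_g)+\theta^K_k .$$
I then verify three things:
\begin{itemize}
\item[(a)] $\widehat A$ is basic for the $H$-action $(g,k)\cdot h=(gh,\lambda(h)^{-1}k)$. It annihilates the generators $(X,-\lambda_\ast X)$ by (i), and it is $H$-invariant by (ii).
\item[(b)] It descends to a $\mathfrak k$-valued $1$-form on $P_\lambda$ that is $K$-equivariant and reproduces generators.
\item[(c)] It is invariant under left multiplication in the $G$ factor.
\end{itemize}
By construction $j^\ast$ of the descended form returns $\alpha$ at $e$, and the two assignments are mutually inverse.

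I expect the main obstacle to be the bookkeeping in step (a) and in the derivation of (ii). There one must track the sign and inverse conventions ($\Ad(h)$ versus $\Ad(h^{-1})$, and left versus right actions on $G\times K$) so that the basic-form condition becomes exactly (ii) as stated. I would first check the computation on $\mathfrak h$-directions infinitesimally, namely $\alpha\circ\operatorname{ad}(Y)=\operatorname{ad}(\lambda_\ast Y)\circ\alpha$ for $Y\in\mathfrak h$, and then integrate it to the group-level statement.
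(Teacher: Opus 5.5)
Your argument is correct. The paper gives no proof of its own; it imports the lemma from \cite{wang1958invariant} and \cite[Theorem~1.4.5]{cap2009}, and your route (pull back along $j(g)=[g,e]$ to get a left-invariant form whose value at $e$ is $\alpha$, and conversely descend $\Ad(k^{-1})\alpha(\theta)+\theta^K$ from $G\times K$) is essentially the standard proof found there.

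Two small cautions on the bookkeeping. First, write the $H$-generator at $(g,k)$ as $(X^L_g,-dR_k\lambda_\ast X)$, so that $\theta^K$ evaluates it to $-\Ad(k^{-1})\lambda_\ast X$ and condition (i) cancels it. Second, rely on your group-level derivation of (ii) rather than integrating the infinitesimal condition $\alpha\circ\operatorname{ad}(Y)=\operatorname{ad}(\lambda_\ast Y)\circ\alpha$, since that only recovers (ii) when $H$ is connected (as it is here for $H=SO(2)$).
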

For later use, we also recall the curvature formula associated with Wang's
description. Let $\mathfrak g=\mathfrak h\oplus\mathfrak m$ be the fixed
reductive decomposition and let $\Lambda=\alpha_{\vert\fm}$. Then the curvature
of the invariant connection determined by $\alpha$ is given at the origin by
\begin{equation}\label{eq:homogeneous-invariant-curvature}
F_\Lambda(X,Y)=[\Lambda(X),\Lambda(Y)]-\Lambda([X,Y]_{\fm})-\lambda_\ast([X,Y]_{\fh}),\quad X,Y\in\fm.
\end{equation}
For our reductive decomposition
$$
\so(4)=\fh\oplus\fm, \qquad \fh=\Span\{e_6\},
$$
we write $\Lambda:=\alpha_{\vert\fm}.$ Since $SO(2)$ is connected and generated by $\exp(\theta e_6)$, condition  \textup{(ii)} can be written infinitesimally as
\begin{equation}\label{eq:wang-infinitesimal-general}
\Lambda([e_6,X])= [(\lambda_\ast e_6),\Lambda(X)], \qquad X\in\fm.
\end{equation}
This infinitesimal form will be used together with the isotropy action
\eqref{eq:so4-brackets-hm} to describe invariant connections explicitly.
%--------------------------------------------------------
\subsection{Homogeneous $U(1)$-bundles}\label{sec:11}
%--------------------------------------------------------
We now specialize the previous discussion to the case $K=U(1)$. Since $SO(2)\cong S^1$, every homomorphism  $\lambda:SO(2)\longrightarrow U(1)$ is of the form
\begin{equation}\label{eq:U1-homomorphism-lambda-m}
\lambda_m(\exp(\theta e_6))=\exp(im\theta), \qquad m\in\mathbb Z.
\end{equation}
Since $U(1)$ is abelian,   conjugation in $U(1)$ is trivial. Hence $\lambda_m$ and $\lambda_n$ are conjugate if and only if $m=n$. Therefore,
homogeneous principal $U(1)$-bundles over
$$
T_1S^3\cong SO(4)/SO(2)
$$
are classified by integers $m\in\mathbb Z$. We denote the bundle corresponding to $m$ by
$$
P_m=SO(4)\times_{\lambda_m}U(1) \longrightarrow SO(4)/SO(2).
$$
The integer $m$ determines the first Chern class of the associated homogeneous line bundle, up to the sign convention chosen in the identification $SO(2)\cong U(1)$. Let $\fu(1)=i\mathbb R$. Differentiating
\eqref{eq:U1-homomorphism-lambda-m} at the identity gives
\begin{equation}\label{eq:U1-lambda-differential}
(\lambda_m)_*(e_6)=im.
\end{equation}
Let $A$ be an invariant connection on $P_m$, and let
$\alpha:\so(4)\to\fu(1)$ be the corresponding Wang map. Then
$$
\alpha_{\vert\fh}=(\lambda_m)_*,
\qquad
\Lambda:=\alpha_{\vert\fm}.
$$
Since $U(1)$ is abelian, the adjoint action on $\fu(1)$ is trivial. Therefore
the infinitesimal Wang condition \eqref{eq:wang-infinitesimal-general} becomes
$$
\Lambda([e_6,X])=0,
\qquad
X\in\fm.
$$
Using the isotropy action \eqref{eq:so4-brackets-hm}, we obtain
$$
\Lambda(X_1)=\Lambda(X_2)=\Lambda(X_3)=\Lambda(X_4)=0.
$$
On the trivial summand $\fm_3=\Span\{X_5\}$, there is one free real parameter:
$$
\Lambda(X_5)=i\mu, \qquad \mu\in\mathbb R.
$$
Since $U(1)$ is abelian, the first term of \eqref{eq:homogeneous-invariant-curvature} vanishes. Hence
$$
F_\Lambda(X,Y) = -\Lambda([X,Y]_{\fm}) - (\lambda_m)_\ast([X,Y]_{\fh}).
$$
Using \eqref{eq:so4-brackets-m}, \eqref{eq:so4-brackets-h}, and
\eqref{eq:U1-lambda-differential}, the components which enter the local
monopole system are
\begin{equation}\label{eq:U1-relevant-curvature-components}
F_{13}=F_{24}=-i\mu,\quad F_{15}=F_{25}=F_{35}=F_{45}=0.
\end{equation}
The nonzero isotropy components are
$$
F_{12}=im, \qquad F_{34}=im.
$$
We summarize this computation as follows.
\begin{proposition}\label{prop:U1-invariant-monopoles}
Let $P_m=SO(4)\times_{\lambda_m}U(1)\longrightarrow T_1S^3$ be the homogeneous $U(1)$-bundle determined by  \eqref{eq:U1-homomorphism-lambda-m}, with $m\in\mathbb Z$. For $\mu\in\mathbb R$, let $\alpha_{m,\mu}:\so(4)\to\fu(1)$ be the Wang map defined by
$$
\alpha_{m,\mu}(e_6)=im, \qquad \alpha_{m,\mu}(X_5)=i\mu, \qquad \alpha_{m,\mu}(X_j)=0,\quad j=1,2,3,4.
$$
Let $A_{m,\mu}$ be the invariant connection on $P_m$ determined by
$\alpha_{m,\mu}$. Then, for every invariant Higgs field
$\phi\in i\mathbb R$, the pair $(A_{m,\mu},\phi)$ satisfies the contact
monopole equation if and only if $\mu=0.$
\end{proposition}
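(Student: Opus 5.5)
The plan is to reduce the statement to the local system of Proposition \ref{prop:local-monopole-system}, evaluated at the origin of $SO(4)/SO(2)$. First I will check that the invariant frame $X_1,\dots,X_5$ plays the role of the adapted frame \eqref{eq:adaptedFrame}. The homogeneous structure gives $\eta=X^5$, $d\eta=-(X^{13}+X^{24})$ and $\vol_g=X^{12345}$, which is exactly the normal form \eqref{eq:detaadapted}--\eqref{eq:volume-adapted-frame}. The computation leading to \eqref{eq:astF_Aomega} used only these two facts. So the system \eqref{eq:local-monopole-system} holds verbatim with $E_i$ replaced by $X_i$.

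Second, I compute $\nabla_A\phi$ for an invariant Higgs field. For $K=U(1)$ the adjoint bundle is trivial, and an invariant section is a constant $\phi\in i\mathbb R$. Invariance alone forces $\phi$ to be constant; the identification of $\mathfrak g_P$ with $M\times i\mathbb R$ is canonical because $\Ad$ is trivial. Its covariant derivative is therefore $d\phi=0$, and the monopole equation reduces, as in Corollary \ref{cor:parallel-higgs-components}, to
$$F_{15}=F_{25}=F_{35}=F_{45}=0,\qquad F_{13}+F_{24}=0.$$
These are pointwise identities between invariant quantities. By homogeneity it suffices to check them at the origin, where the curvature is given by \eqref{eq:homogeneous-invariant-curvature}.

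Third, I insert the components \eqref{eq:U1-relevant-curvature-components}, obtained from the brackets \eqref{eq:so4-brackets-m}--\eqref{eq:so4-brackets-h}. For the mixed terms, $[X_i,X_5]_{\fm}\in\{\mp X_3,\mp X_4,X_1,X_2\}$, all of which lie in $\ker\Lambda$, and these brackets have no $\fh$-part. Hence $F_{i5}=0$ for $i=1,\dots,4$. For the remaining terms, $[X_1,X_3]_{\fm}=[X_2,X_4]_{\fm}=X_5$ with zero $\fh$-component gives $F_{13}=F_{24}=-i\mu$. The equation thus becomes $-2i\mu=0$, i.e.\ $\mu=0$. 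Conversely, if $\mu=0$ every condition holds for every constant $\phi$.

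The main obstacle is bookkeeping rather than substance. I must confirm that $\fh$-components do not enter $F_{13}$, $F_{24}$ or $F_{i5}$; by \eqref{eq:so4-brackets-h}, they enter only $F_{12}$ and $F_{34}$, which the equation ignores. I must also confirm that the sign and orientation conventions match, so that the combination really is $F_{13}+F_{24}$ and not $F_{13}-F_{24}$. Matching $d\eta$ and the volume form with the adapted-frame normalization settles the second point. Consistent with Lemma \ref{lem:compact-contact-monopoles}, the solutions obtained are non-flat for $m\neq0$, with $F_{12}=F_{34}=im$, and their Higgs fields are parallel.
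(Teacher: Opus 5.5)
Your proposal is correct and follows essentially the same route as the paper. The adjoint action of $U(1)$ is trivial, so the invariant Higgs field is constant and parallel; Corollary \ref{cor:parallel-higgs-components} then reduces the equation to $F_{i5}=0$ and $F_{13}+F_{24}=0$, and the curvature components \eqref{eq:U1-relevant-curvature-components} give $F_{13}+F_{24}=-2i\mu$. Your explicit check that the invariant frame reproduces the normal forms of $d\eta$ and $\vol_g$ spells out an identification the paper uses without comment. Note that the Hodge-star step also needs the $Q$-orthonormality of $X_1,\dots,X_5$. This check is what settles the sign question $F_{13}\pm F_{24}$, which genuinely matters here because $F_{13}=F_{24}$.
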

\begin{proof}
Since $U(1)$ is abelian, the adjoint representation is trivial. Hence every
invariant Higgs field $\phi\in i\mathbb R$ is constant and satisfies
$$
\nabla_{A_{m,\mu}}\phi=0.
$$
By Corollary \ref{cor:parallel-higgs-components}, the contact monopole equation
is equivalent to
$$
F_{15}=F_{25}=F_{35}=F_{45}=F_{13}+F_{24}=0.
$$
Using the curvature components computed in
\eqref{eq:U1-relevant-curvature-components}, we have
$$
F_{15}=F_{25}=F_{35}=F_{45}=0,
\qquad
F_{13}+F_{24}=-2i\mu.
$$
Therefore the contact monopole equation holds if and only if $\mu=0$.
\end{proof}
\begin{remark}
Proposition \ref{prop:pure-pullback-rigidity} shows that pullbacks from the base $S^3$ solve the contact monopole equation \eqref{eq:monopole 5} only in the flat and parallel case. Thus, from the point of view of the fibration
$ 
S^2\longrightarrow T_1S^3\longrightarrow S^3,
$ 
naive pullback constructions do not produce nontrivial contact monopoles.  By contrast, the homogeneous $U(1)$-bundles
$$
P_m=SO(4)\times_{\lambda_m}U(1)\longrightarrow SO(4)/SO(2)
$$
provide genuine $5$-dimensional examples of contact monopoles for $m\neq0$.
\end{remark}
%---------------------------------------------------------------------
\subsection{Homogeneous $SO(3)$-bundles}
\label{subsec:homogeneous-SO3-bundles}
%---------------------------------------------------------------------

We now consider the first non-abelian case. According to Lemma
\ref{lem:homogeneous-principal-bundles}, homogeneous principal $SO(3)$-bundles
over $T_1S^3$ are determined, up to isomorphism, by conjugacy classes of
homomorphisms
$$
\lambda:SO(2)\longrightarrow SO(3).
$$
We fix a basis $\{T_1,T_2,T_3\}$ of $\so(3)$ satisfying
\begin{equation}\label{eq:so3-brackets}
[T_1,T_2]=T_3,\qquad
[T_2,T_3]=T_1,\qquad
[T_3,T_1]=T_2.
\end{equation}
Equivalently, one may take
$ 
T_1=
\begin{pmatrix}
0&0&0\\
0&0&-1\\
0&1&0
\end{pmatrix},
$ $
T_2=
\begin{pmatrix}
0&0&1\\
0&0&0\\
-1&0&0
\end{pmatrix} 
$ and $
T_3=
\begin{pmatrix}
0&-1&0\\
1&0&0\\
0&0&0
\end{pmatrix}. $
The subalgebra $\ft=\Span\{T_3\}$ is a maximal torus subalgebra of $\so(3)$. Since $SO(2)\cong S^1$ is abelian, the image of any homomorphism
$$
\lambda:SO(2)\to SO(3)
$$ 
is contained, up to conjugation, in a maximal torus of
$SO(3)$. Hence every such homomorphism is conjugate to one of the form
\begin{equation}\label{eq:SO3-homomorphism-lambda-m}
\lambda_m(\exp(\theta e_6)) = \exp(m\theta T_3), \quad m\in\mathbb Z.
\end{equation}
The integer $m$ is the weight of the homomorphism. The homomorphisms
$\lambda_m$ and $\lambda_{-m}$ correspond to rotations around the same axis
with opposite orientations. They are conjugate in $SO(3)$, because an element
of $SO(3)$ can send the chosen axis to its opposite. Therefore, up to
conjugation, the homomorphisms $SO(2)\to SO(3)$ are represented by
$$
m\in\mathbb Z_{\geq0}.
$$
Thus the homogeneous principal $SO(3)$-bundles over $T_1S^3$ are given by
\begin{equation}\label{eq:SO3-homogeneous-bundles}
P_m = SO(4)\times_{\lambda_m}SO(3) \rightarrow SO(4)/SO(2), \quad m\in\mathbb Z_{\geq0}.
\end{equation}
In this subsection, $P_m$ denotes the homogeneous principal $SO(3)$-bundle
associated with $\lambda_m$. Differentiating
\eqref{eq:SO3-homomorphism-lambda-m} at the identity gives
\begin{equation}\label{eq:SO3-lambda-differential}
(\lambda_m)_\ast(e_6)=mT_3.
\end{equation}
The case $m=0$ corresponds to the trivial homomorphism. The case $m=1$ is the
standard inclusion of the isotropy circle into a maximal torus of $SO(3)$.
%----------------------------------------------------------------------------
\subsection{Invariant $\SO(3)$-connections and contact monopoles}
\label{subsec:invariant-SO3-connections}
%---------------------------------------------------------------------------- 
Let $P_m$ be the homogeneous $SO(3)$-principal bundle
\eqref{eq:SO3-homogeneous-bundles}. By Wang's theorem
(cf. Lemma \ref{lem:wang-theorem-T1S3}), an invariant connection on $P_m$ is
determined by a linear map
$$
\alpha:\so(4)\longrightarrow\so(3)
$$
satisfying

$$
\alpha_{\vert\fh}=(\lambda_m)_\ast
$$
and
$$
\alpha\circ\Ad(h)=\Ad(\lambda_m(h))\circ\alpha,
\qquad
h\in SO(2).
$$
Writing
$$
\Lambda:=\alpha_{\vert\fm},
$$
and using \eqref{eq:SO3-lambda-differential}, the infinitesimal Wang condition
\eqref{eq:wang-infinitesimal-general} becomes
\begin{equation}\label{eq:SO3-infinitesimal-equivariance}
\Lambda([e_6,X])=[mT_3,\Lambda(X)],
\qquad
X\in\fm.
\end{equation}
Together with the isotropy action \eqref{eq:so4-brackets-hm}, this determines
the possible invariant connections.

We first record the form of the Wang maps. If $m\neq1$, the weight-one
summands $\fm_1$ and $\fm_2$ cannot map equivariantly to the weight-$m$
summand of $\so(3)$. This follows directly from
\eqref{eq:SO3-infinitesimal-equivariance}. For instance, let
$$
A=\Lambda(X_1),
\qquad
B=\Lambda(X_2).
$$
Using \eqref{eq:so4-brackets-hm}, we obtain
$$
-B=[mT_3,A],
\qquad
A=[mT_3,B].
$$
Writing $A=aT_1+bT_2+cT_3$, one gets
$$
c=0,
\qquad
(1-m^2)a=(1-m^2)b=0.
$$
Hence, since $m\geq0$,  for $m\neq1$ we have 
$$
\Lambda(X_1)=\Lambda(X_2)=0.
$$
The same argument gives
$$
\Lambda(X_3)=\Lambda(X_4)=0.
$$
Finally, since $[e_6,X_5]=0$, the equivariance condition \eqref{eq:SO3-infinitesimal-equivariance} gives
$$
[mT_3,\Lambda(X_5)]=0.
$$
Thus, for $m\geq1$, this means that $\Lambda(X_5)$ lies in the centralizer of
$T_3$ in $\so(3)$, hence 
$$
\Lambda(X_5)=\mu T_3,\quad \mu\in\mathbb R,
$$
whereas for $m=0$ one has
$$
\Lambda(X_5)=\nu,\quad\nu\in\so(3).
$$

For $m=1$, the infinitesimal equivariance condition \eqref{eq:SO3-infinitesimal-equivariance} becomes
$$
\Lambda([e_6,X])=[T_3,\Lambda(X)].
$$
Since the action of $e_6$ rotates the planes $\fm_1$ and $\fm_2$ with the same
weight as the adjoint action of $T_3$ on $\Span\{T_1,T_2\}$, equivariant
nonzero maps from $\fm_1$ and $\fm_2$ to $\Span\{T_1,T_2\}$ may occur. 
Indeed, solving the equivariance equations on $\fm_1$ gives
$$
\Lambda(X_1)=aT_1+bT_2 \quad \text{and}  \quad \Lambda(X_2)=bT_1-aT_2;
$$ 
the same argument applied to $\fm_2$ gives the components along $X_3,X_4$, while the condition
$[T_3,\Lambda(X_5)]=0$ forces $\Lambda(X_5)$ to lie in $\Span\{T_3\}$.
\begin{equation}\label{eq:SO3-Lambda-m1}
\begin{aligned}
\Lambda(X_1)&=aT_1+bT_2,&\Lambda(X_2)&=bT_1-aT_2,\\
\Lambda(X_3)&=cT_1+dT_2,&\Lambda(X_4)&=dT_1-cT_2,\\
\Lambda(X_5)&=\mu T_3.
\end{aligned}
\end{equation}
An invariant Higgs field is determined by an element $\phi_0\in\so(3)$ fixed by the isotropy action:
$$
\Ad(\lambda_m(h))\phi_0=\phi_0,\quad h\in SO(2).
$$
Thus, for $m\geq1$, we have $\phi_0=\tau T_3, $  $\tau\in\mathbb R.$
For $m=0$, the isotropy action on $\so(3)$ is trivial, and therefore
$\phi_0$ may be any element of $\so(3)$.

The curvature of an invariant connection determined by $\Lambda$ is given at
the origin by \eqref{eq:homogeneous-invariant-curvature}.  We first treat  the cases $m\neq1$.
\begin{lemma}\label{lem:SO3-m-not-one}
Let $P_m\to T_1S^3$ be the homogeneous $SO(3)$-bundle \eqref{eq:SO3-homogeneous-bundles} associated with $\lambda_m$, with $m\neq1$. Then the invariant contact monopole equation forces $\Lambda=0.$ Consequently, the corresponding invariant connection reduces to the maximal torus $SO(2)=\exp(\Span\{T_3\})\subset SO(3).$ More precisely:
\begin{itemize}
    \item if $m=0$, the connection is flat; 
    \item if $m\geq2$, the connection is not necessarily flat; however, the 
          remaining isotropy component of the Wang map is
          $$
            (\lambda_m)_*(e_6)=mT_3,
          $$
         and the curvature takes values in the abelian subalgebra
          $\Span\{T_3\}\subset\so(3)$.  
\end{itemize}
In particular, the cases $m\neq1$ do not produce genuinely non-abelian homogeneous contact monopoles.
\end{lemma}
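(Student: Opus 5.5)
The plan is to make the invariant contact monopole equation completely explicit for $m\neq1$, using the Wang-map normal form already derived above, and then read off $\Lambda=0$. By the equivariance computation preceding the lemma, for $m\neq1$ we have $\Lambda(X_1)=\dots=\Lambda(X_4)=0$. The remaining component is $\Lambda(X_5)=\mu T_3$ with $\mu\in\mathbb R$ when $m\geq2$, and $\Lambda(X_5)=\nu\in\so(3)$ arbitrary when $m=0$. Correspondingly, the invariant Higgs field is $\phi_0=\tau T_3$ when $m\geq2$, and an arbitrary $\phi_0\in\so(3)$ when $m=0$. I will use that, for an invariant section of an associated bundle, the covariant derivative at the origin is $\nabla_{A,X}\phi=[\Lambda(X),\phi_0]$ for $X\in\fm$. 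The equation at the origin is then equivalent to the equation everywhere, by invariance.

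\textbf{Step 1 (curvature).} I evaluate \eqref{eq:homogeneous-invariant-curvature} using \eqref{eq:so4-brackets-m} and \eqref{eq:so4-brackets-h}. Since $\Lambda$ vanishes on $X_1,\dots,X_4$, the bracket term $[\Lambda(X),\Lambda(Y)]$ always vanishes. The term $\Lambda([X,Y]_\fm)$ contributes only through $X_5$, namely via $[X_1,X_3]_\fm=[X_2,X_4]_\fm=X_5$. This gives
\[
F_{13}=F_{24}=-\Lambda(X_5),\qquad F_{12}=F_{34}=mT_3 .
\]
Because $[X_i,X_5]_\fm\in\Span\{X_1,\dots,X_4\}$ for $i\le4$, we also get $F_{15}=F_{25}=F_{35}=F_{45}=0$, and all other components vanish.

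\textbf{Step 2 (the system).} I insert these components into Proposition \ref{prop:local-monopole-system}. This is legitimate because the homogeneous frame $X_1,\dots,X_5$ reproduces the adapted frame, with the same $d\eta$ and the same orientation. The four horizontal equations then read $0=0$, since both sides vanish. The Reeb equation becomes
\[
[\Lambda(X_5),\phi_0]=-2\Lambda(X_5).
\]
For $m\geq2$ the left side is $[\mu T_3,\tau T_3]=0$, so $\mu=0$. For $m=0$ we have $[\nu,\phi_0]=-2\nu$. Pairing with $\nu$ under the $\Ad$-invariant inner product, the left side gives $Q([\nu,\phi_0],\nu)=0$ by skew-adjointness of $\mathrm{ad}$. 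Hence $|\nu|^2=0$, so $\nu=0$. As an alternative, one could invoke Lemma \ref{lem:compact-contact-monopoles} on the compact manifold $T_1S^3$, which forces $\nabla_A\phi=0$ and hence $F_{13}+F_{24}=0$ directly. Either route gives $\Lambda=0$.

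\textbf{Step 3 (conclusions).} With $\Lambda=0$, the Wang map is $\alpha=(\lambda_m)_*$ on $\fh$ and zero on $\fm$. Its image is contained in $\Span\{T_3\}$, so the connection reduces to the torus $\exp(\Span\{T_3\})$. For $m=0$ the curvature is identically zero, so the connection is flat. For $m\geq2$ the only nonzero components are $F_{12}=F_{34}=mT_3$, which take values in the abelian subalgebra $\Span\{T_3\}$.

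The main obstacle is Step 2 in the case $m=0$. There both $\nu$ and $\phi_0$ are unconstrained, so the equation is genuinely nonlinear, and the skew-adjointness argument (or the compact rigidity lemma) is what closes it. A secondary point to verify is the formula $\nabla_X\phi=[\Lambda(X),\phi_0]$, including its sign convention. The sign does not affect the conclusion, since skew-adjointness kills the left side either way.
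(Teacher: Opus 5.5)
Your proposal is correct and follows the paper's proof essentially step by step. You use the same Wang-map normal form, the same curvature components $F_{13}=F_{24}=-\Lambda(X_5)$ with $F_{15}=F_{25}=F_{35}=F_{45}=0$, and the same Reeb equation $[\Lambda(X_5),\phi_0]=-2\Lambda(X_5)$, which is killed by $[T_3,T_3]=0$ for $m\geq2$ and by $\Ad$-invariance of the inner product for $m=0$; your explicit $F_{12}=F_{34}=mT_3$ and the alternative route via Lemma \ref{lem:compact-contact-monopoles} are valid small additions.
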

\begin{proof}
For $m\neq1$, the equivariance condition gives
$$
\Lambda(X_1)=\Lambda(X_2)=\Lambda(X_3)=\Lambda(X_4)=0.
$$

If $m=0$, then 
$$
\Lambda(X_5)=\nu, \quad \nu\in\so(3),
$$ 
and the invariant Higgs field $\phi_0$ is arbitrary. Using the curvature formula \eqref{eq:homogeneous-invariant-curvature} and the $\fm$-bracket identities \eqref{eq:so4-brackets-m}, we obtain
$$
F_{13}=F_\Lambda(X_1,X_3)=-\Lambda([X_1,X_3]_{\fm})=-\Lambda(X_5)=-\nu,
$$
and similarly $F_{24}=-\nu.$ Moreover,
$$
F_{15}=F_{25}=F_{35}=F_{45}=0.
$$
For an invariant Higgs field and an invariant connection, the covariant
derivative in a homogeneous direction is
$$
\nabla_{A,X}\phi_0=[\Lambda(X),\phi_0].
$$
Hence, since $\xi=X_5$, the last equation of
\eqref{eq:local-monopole-system} gives
$$
[\nu,\phi_0]=F_{24}+F_{13}=-2\nu.
$$
Taking the inner product with $\nu$ and using $\Ad$-invariance, we obtain
$$
0=\langle[\nu,\phi_0],\nu\rangle=-2|\nu|^2.
$$
Thus $\nu=0$, and therefore $\Lambda=0$ in this case.

If $m\geq2$, then
$$
\Lambda(X_5)=\mu T_3,
\qquad
\phi_0=\tau T_3.
$$
Again, using \eqref{eq:homogeneous-invariant-curvature} and
\eqref{eq:so4-brackets-m}, we obtain
$$
F_{13}=F_{24}=-\mu T_3,
\qquad
F_{15}=F_{25}=F_{35}=F_{45}=0.
$$
Since $[\Lambda(X_5),\phi_0]=[\mu T_3,\tau T_3]=0,$ the last equation of \eqref{eq:local-monopole-system} gives
$$
0=F_{24}+F_{13}=-2\mu T_3.
$$
Therefore $\mu=0$. Thus, for $m\neq1$, the invariant monopole equation forces the $\fm$-component
of the Wang map to vanish. If $m=0$, the connection is flat. If $m\geq2$, the
remaining isotropy component of the Wang map is
$$
(\lambda_m)_*(e_6)=mT_3,
$$
so the corresponding connection reduces to the maximal torus $\exp(\Span\{T_3\})\subset SO(3).$
This proves the claim.
\end{proof}
We are therefore left with the case $m=1$, which is the only case where
nonzero components of $\Lambda$ along $\fm_1\oplus\fm_2$ may occur.

Using \eqref{eq:SO3-Lambda-m1}, \eqref{eq:so4-brackets-m}, and
\eqref{eq:homogeneous-invariant-curvature}, we compute the curvature components which
enter the local monopole system \eqref{eq:local-monopole-system}. For example,
\begin{align*}
F_{35} 
&= F_\Lambda(X_3,X_5) =
[\Lambda(X_3),\Lambda(X_5)] - \Lambda([X_3,X_5]_{\fm})- (\lambda_1)_*([X_3,X_5]_{\fh}).
\end{align*}
By \eqref{eq:so4-brackets-m}, we have
$[X_3,X_5]_{\fm}=X_1, $ and this bracket has no $\fh$-component. Hence
\begin{align*}
F_{35}&= [\Lambda(X_3),\Lambda(X_5)] - \Lambda(X_1)
=[cT_1+dT_2,\mu T_3] - (aT_1+bT_2).
\end{align*}
Using the bracket relations \eqref{eq:so3-brackets}, namely
$$
[T_1,T_3]=-T_2,\quad [T_2,T_3]=T_1,
$$
we obtain
\begin{align*}
[cT_1+dT_2,\mu T_3] &= \mu c[T_1,T_3]+\mu d[T_2,T_3] = -\mu cT_2+\mu dT_1.
\end{align*}
Therefore
$$
F_{35}
=
(\mu d-a)T_1+(-\mu c-b)T_2.
$$
The remaining components are computed in the same way. Thus the curvature
components entering \eqref{eq:local-monopole-system} are
\begin{equation}\label{eq:SO3-m1-relevant-curvature}
\begin{alignedat}{2}
F_{35} &= (\mu d-a)T_1+(-\mu c-b)T_2,
&\quad
F_{45} &= (-\mu c-b)T_1+(a-\mu d)T_2,\\
F_{15} &= (\mu b+c)T_1+(d-\mu a)T_2,
&\quad
F_{25} &= (d-\mu a)T_1+(-c-\mu b)T_2,\\
F_{24}+F_{13} &= 2(ad-bc-\mu)T_3. &&
\end{alignedat}
\end{equation}
Moreover, since $\phi_0=\tau T_3$ and the Higgs field is invariant, for
$X\in\fm$ we have
$$
\nabla_{A,X}\phi_0=[\Lambda(X),\phi_0].
$$
For example, using \eqref{eq:SO3-Lambda-m1} and \eqref{eq:so3-brackets},
\begin{align*}
\nabla_{A,X_1}\phi_0
&= [\Lambda(X_1),\phi_0] = [aT_1+bT_2,\tau T_3]= \tau a[T_1,T_3]+\tau b[T_2,T_3]\\
&= -\tau aT_2+\tau bT_1 = \tau(bT_1-aT_2).
\end{align*}
The remaining components are computed in the same way. Thus
\begin{equation}\label{eq:SO3-m1-higgs-derivatives}
\begin{alignedat}{2}
\nabla_{A,X_1}\phi_0 &= \tau(bT_1-aT_2),
&\quad \nabla_{A,X_2}\phi_0 &= -\tau(aT_1+bT_2),\\
\nabla_{A,X_3}\phi_0 &= \tau(dT_1-cT_2),
&\quad \nabla_{A,X_4}\phi_0 &= -\tau(cT_1+dT_2),\\
\nabla_{A,X_5}\phi_0 &= 0. &&
\end{alignedat}
\end{equation}
Therefore, by Proposition \ref{prop:local-monopole-system}, the invariant
contact monopole equation is obtained by comparing
\eqref{eq:SO3-m1-relevant-curvature} with
\eqref{eq:SO3-m1-higgs-derivatives}. More explicitly, the first equation
$\nabla_{A,X_1}\phi_0=F_{35}$ gives
$$
\tau(bT_1-aT_2) = (\mu d-a)T_1+(-\mu c-b)T_2.
$$
Equating the $T_1$ and $T_2$ components, we obtain
$\mu d-a=\tau b$ and $\mu c+b=\tau a.$ The second equation $\nabla_{A,X_2}\phi_0=F_{45}$  gives the same two relations. Similarly, the equations
$$
\nabla_{A,X_3}\phi_0=-F_{15}, \qquad \nabla_{A,X_4}\phi_0=-F_{25}
$$
give $\tau d=-\mu b-c$ and $\tau c=d-\mu a.$ Finally, since $\nabla_{A,X_5}\phi_0=0$ and
$$
F_{24}+F_{13}=2(ad-bc-\mu)T_3,
$$
the last equation gives $\mu=ad-bc.$  Consequently, the invariant contact monopole equation for $m=1$ is equivalent to the following algebraic system in the real parameters $a,b,c,d,\mu,\tau\in\mathbb R,$
where $a,b,c,d,\mu$ determine the invariant connection through the Wang map
\eqref{eq:SO3-Lambda-m1}, and $\tau$ determines the invariant Higgs field
$\phi_0=\tau T_3$:
\begin{equation}\label{eq:SO3-m1-monopole-system}
\left\{
\begin{alignedat}{3}
\tau b &=\mu d-a,        &\qquad \tau a&=\mu c+b, &\quad\mu&=ad-bc\\
\tau d&=-\mu b-c,      &\qquad \tau c&=d-\mu a. 
\end{alignedat}
\right.
\end{equation}
\begin{proposition}\label{prop:SO3-m1-monopoles}
Let $P_1\to T_1S^3$ be the homogeneous $SO(3)$-bundle \eqref{eq:SO3-homogeneous-bundles} corresponding to $m=1$. Let $A_{a,b,c,d,\mu}$ be the invariant connection determined by the Wang map \eqref{eq:SO3-Lambda-m1}, where $a,b,c,d,\mu\in\mathbb R$. Let $\phi_\tau$ be the invariant Higgs field  determined by $\phi_\tau=\tau T_3,$ $\tau\in\mathbb R.$
Then the invariant pair $(A_{a,b,c,d,\mu},\phi_\tau)$ satisfies the contact
monopole equation if and only if one of the following alternatives holds:
\begin{enumerate}
    \item the toral branch:
    $ a=b=c=d=\mu=0,$  $\tau\in\mathbb R;$

    \item the genuinely non-abelian branch:
    $\tau=0,$ $\mu=\pm1,$ $c=-\mu b,$ $d=\mu a$ and $a^2+b^2=1.$
\end{enumerate}
In particular, the case $m=1$ produces invariant contact monopoles whose Wang
map has nonzero components along the contact distribution $\fm_1\oplus\fm_2$.
\end{proposition}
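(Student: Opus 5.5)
The plan is to treat the statement as a purely algebraic claim: by the computations preceding it, the invariant contact monopole equation for $m=1$ is equivalent to the system \eqref{eq:SO3-m1-monopole-system}. So it suffices to solve \eqref{eq:SO3-m1-monopole-system} in the real unknowns $a,b,c,d,\mu,\tau$. To make the rotational structure visible, I will package the contact components into two complex numbers,
$$
z:=a+ib,\qquad w:=c+id.
$$
This is natural because $T_3$ acts on $\Span\{T_1,T_2\}$ by a rotation, which is multiplication by $\pm i$ under this identification.

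First I rewrite the four linear equations. The pair $\tau b=\mu d-a$, $\tau a=\mu c+b$ is equivalent to $a+\tau b=\mu d$ and $b-\tau a=-\mu c$. Taking real part plus $i$ times the second gives
$$
(1-i\tau)z=-i\mu\, w .
$$
Similarly, $\tau d=-\mu b-c$, $\tau c=d-\mu a$ become $c+\tau d=-\mu b$ and $d-\tau c=\mu a$, that is,
$$
(1-i\tau)w=i\mu\, z .
$$
Multiplying the first identity by $(1-i\tau)$ and substituting the second yields
$$
(1-i\tau)^2 z=\mu^2 z .
$$

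Next I split into cases. If $z=0$, the second identity gives $(1-i\tau)w=0$, and since $1-i\tau\neq0$ we get $w=0$. The quadratic equation $\mu=ad-bc$ then forces $\mu=0$, while $\tau$ remains free; this is the toral branch. If $z\neq0$, then $(1-\tau^2)-2i\tau=\mu^2$. Comparing imaginary parts gives $\tau=0$, and comparing real parts then gives $\mu^2=1$. With $\tau=0$, the second identity reads $w=i\mu z$, which in components is $c=-\mu b$, $d=\mu a$. Substituting into $\mu=ad-bc$ gives $\mu=\mu(a^2+b^2)$, hence $a^2+b^2=1$ because $\mu\neq0$. This is the non-abelian branch.

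For the converse, I substitute each branch back into \eqref{eq:SO3-m1-monopole-system}. This is immediate: the first branch is trivial, and in the second the relations $w=i\mu z$, $\mu^2=1$ make both complex identities hold, while $ad-bc=\mu(a^2+b^2)=\mu$. The final sentence of the proposition follows because on the second branch $z\neq0$, so $\Lambda(X_1)\neq0$. The only real obstacle is bookkeeping: making sure the sign conventions in \eqref{eq:SO3-m1-monopole-system} translate into the two complex identities with the correct signs. Once that is checked, the argument reduces to the one-line eigenvalue comparison $(1-i\tau)^2=\mu^2$.
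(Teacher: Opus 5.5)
Your argument is correct, and your translation of \eqref{eq:SO3-m1-monopole-system} into $(1-i\tau)z=-i\mu w$ and $(1-i\tau)w=i\mu z$ holds with the signs you give. Your route differs from the paper's mainly in how it is organized.

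The paper splits on $\mu=0$ versus $\mu\neq0$. In the second case it solves the last two linear equations for $c,d$ and feeds them into $\mu=ad-bc$ to get $a^2+b^2=1+\tau^2$, hence $(a,b)\neq(0,0)$. It then forces the determinant of a real $2\times2$ homogeneous system in $(a,b)$ to vanish; this determinant turns out to be a sum of two squares.

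You split on $z=0$ versus $z\neq0$ instead. You reach the scalar condition $\bigl((1-i\tau)^2-\mu^2\bigr)z=0$ from the linear equations alone, and use the quadratic equation only at the end.

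The two eliminations are the same in substance. The paper's system in $(a,b)$ is, up to the sign of one row, the real form of $(1+i\tau)\bigl((1-i\tau)^2-\mu^2\bigr)z=0$. Its determinant condition reads $(1+\tau^2)\,\bigl|(1-i\tau)^2-\mu^2\bigr|^2=0$.

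Your complex packaging reflects the fact that $T_3$ acts on $\Span\{T_1,T_2\}$ by a rotation. It explains why a sum of squares appears and shortens the key step to one line. The paper's version stays in the real components of \eqref{eq:local-monopole-system}. It also records the intermediate relation $a^2+b^2=1+\tau^2$, which holds before $\tau=0$ is known.
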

\begin{proof}
The invariant monopole equation is equivalent to the algebraic system
\eqref{eq:SO3-m1-monopole-system}. We solve this system by considering the two
possibilities $\mu=0$ and $\mu\neq0$.
First suppose that $\mu=0$. Then the first two equations of
\eqref{eq:SO3-m1-monopole-system} give
$$ 
-a=\tau b, \quad b=\tau a.
$$
Hence $a=-\tau b$ and $b=-\tau^2 b$, so
$$
(1+\tau^2)b=0.
$$
Thus $b=0$, and consequently $a=0$. Similarly, the third and fourth equations
give
$$
\tau d=-c,
\qquad
\tau c=d.
$$
Substituting $d=\tau c$ into the first of these equations gives
$$
\tau^2c=-c,
$$
hence $c=0$ and then $d=0$.Therefore $a=b=c=d=\mu=0$, while $\tau\in\mathbb R$ is arbitrary. This gives the toral branch.

Now suppose that $\mu\neq0$. From the third and fourth equations of
\eqref{eq:SO3-m1-monopole-system}, we obtain
$$
c=-\frac{\mu}{1+\tau^2}(b+\tau a),
\qquad
d=\frac{\mu}{1+\tau^2}(a-\tau b).
$$
Using the last equation of \eqref{eq:SO3-m1-monopole-system}, namely
$\mu=ad-bc$, and since $\mu\neq0$, it follows that
\begin{equation}\label{eq:SO3-radius-relation}
a^2+b^2=1+\tau^2.
\end{equation}
Substituting the above expressions for $c$ and $d$ into the first two equations
of \eqref{eq:SO3-m1-monopole-system} gives
\begin{align*}
(\mu^2-1-\tau^2)a-\tau(\mu^2+1+\tau^2)b&=0,\\
-\tau(\mu^2+1+\tau^2)a-(\mu^2-1-\tau^2)b&=0.
\end{align*}
By \eqref{eq:SO3-radius-relation}, we have $(a,b)\neq(0,0)$. Hence the
determinant of this homogeneous linear system must vanish, that is,
$$
(\mu^2-1-\tau^2)^2+\tau^2(\mu^2+1+\tau^2)^2=0.
$$
Since both terms are nonnegative, each one must vanish. Moreover,
$$
\mu^2+1+\tau^2>0,
$$
and therefore the second term gives $\tau=0$. Substituting $\tau=0$ into the
first term gives
$$
(\mu^2-1)^2=0,
$$
hence $\mu^2=1$. Thus $\mu=\pm1$. With $\tau=0$, the expressions for $c$ and $d$ reduce to $c=-\mu b$ and $d=\mu a$, and \eqref{eq:SO3-radius-relation} becomes $a^2+b^2=1$. This gives the genuinely non-abelian branch and completes the proof.
\end{proof}

%% Loading bibliography style file
\bibliographystyle{unsrtnat}

% Loading bibliography database
\bibliography{refs}
%\vskip3pt 
\end{document}